\theoremstyle{plain}
\newtheorem{proposition}{Proposition}
\newtheorem{theorem}[proposition]{Theorem}
\newtheorem{corollary}[proposition]{Corollary}
\theoremstyle{definition}
\theoremstyle{definition}
\newtheorem{remark}[proposition]{Remark}
\numberwithin{equation}{section}
\numberwithin{proposition}{section}
\gdef\myletter{}
\let\savetheequation\theequation
\def\theequation{\savetheequation\myletter}
\newcommand{\CC}{{\mathbb C}}
\newcommand{\RR}{{\mathbb R}}
\newcommand{\PP}{{\mathbb P}}
\newcommand{\BB}{{\mathbb B}}
\renewcommand{\Im}{\mbox{Im}}
\renewcommand{\Re}{\mbox{Re}}
\def \bar{\overline}
\newcounter{rev}
\newcounter{sq}
\begin{document}

\vskip 3mm

\title[Monge-Amp\`ere Measures for Convex Bodies]{\bf Monge-Amp\`ere Measures for Convex Bodies and Bernstein-Markov
Type Inequalities}

\author{D. Burns, N. Levenberg, S. Ma'u and Sz. R\'ev\'esz}{\thanks
{Supported in part by NSF grants DMS-0514070 (DB).\\ \indent
Supported in part by the Hungarian National Foundation for
Scientific Research, Project \#s T-049301 T-049693 and K-61908
(SzR). \\ \indent This work was accomplished during the fourth
author's stay in Paris under his Marie Curie fellowship, contract
\# MEIF-CT-2005-022927.}}

\address{Univ. of Michigan, Ann Arbor, MI 48109-1043 USA}
\email{dburns@umich.edu}

\address{Indiana University, Bloomington, IN 47405 USA}
\email{nlevenbe@indiana.edu}

\address{Indiana University, Bloomington, IN 47405 USA}
\email{sinmau@indiana.edu}

\address{ A. R\'enyi Institute of Mathematics, Hungarian Academy of Sciences,
\newline \indent Budapest, P.O.B. 127, 1364 Hungary.}
\email{revesz@renyi.hu}

\address{\hskip 1cm and
\newline \indent Institut Henri Poincar\'e, 11 rue Pierre et Marie Curie, 75005 \newline \indent Paris, France}
\email{revesz@ihp.jussieu.fr}

\date{\today}

%\keywords{}

%%%\subjclass{}

\maketitle

\begin{abstract}
We use geometric methods to calculate a formula for the complex
Monge-Amp\`ere measure $(dd^cV_K)^n$, for $K \Subset \RR^n \subset
\CC^n$ a convex body and $V_K$ its Siciak-Zaharjuta extremal
function. Bedford and Taylor had computed this for symmetric
convex bodies $K$. We apply this to show that two methods for deriving Bernstein-Markov-type
inequalities, i.e., pointwise estimates of gradients of
polynomials, yield the same results for all convex bodies. A key role is played by
the geometric result that the extremal inscribed ellipses appearing
in approximation theory are the maximal area ellipses determining
the complex Monge-Amp\`ere solution $V_K$.
\end{abstract}

\section{Introduction.}\label{sec:intro}

%%% \vskip 3mm

For a function $u$ of class $C^2$ on a domain $\Omega\subset
\CC^n$, the complex Monge-Amp\`ere operator applied to $u$ is %%
$$(dd^cu)^n:= i\partial \bar \partial u \wedge \cdots \wedge i\partial \bar \partial u.$$
For a plurisubharmonic (psh) function $u$ which is only locally
bounded, $(dd^cu)^n$ is well-defined as a positive measure. Given
a bounded set $E\subset \CC^n$, we define the Siciak-Zaharjuta
extremal function %%
$$V_E(z):=\sup \{u(z):u\in L(\CC^n), \ u\leq 0 \ \hbox{on} \ E\}$$
where $L(\CC^n)$ denotes the class of psh functions $u$ on $\CC^n$
with $u(z)\leq \log ^+{|z|}+ c(u)$. If $E$ is non-pluripolar, the
upper-regularized function %%
$$V_E^*(z):=\limsup_{\zeta\to z}V_E(\zeta)$$
is a locally bounded psh function which satisfies
$(dd^cV_E^*)^n=0$ outside of $\bar E$ and the total mass of
$(dd^cV_E^*)^n$ is $(2\pi)^n$.

In this paper we consider $E=K\subset \RR^n$ a convex body, that
is, a compact, convex set with non-empty interior. In this
situation the function $V_K = V^*_K$ is continuous but it is not
necessarily smooth, even if $K$ is smoothly bounded and strictly
convex. Indeed, for $K = \BB^n_{\RR}$, the unit ball in $\RR^n
\subset \CC^n$, Lundin found \cite{lunpre}, \cite{bar} that %%
\begin{equation}
\label{eq:realball} V_K(z) =\frac{1}{2} \log h(|z|^2 + |z\cdot z -
1|),
\end{equation}
where $|z|^2 = \sum |z_j|^2, z \cdot z = \sum z_j^2,$ and
$h(\frac{1}{2}(t + \frac{1}{t})) = t,$ for $1 \leq t \in \RR.$ In
this example, the Monge-Amp\`ere measure $(dd^cV_K)^n$ has the
explicit form
$$(dd^cV_K)^n = n! \hbox{vol}(K)\frac{dx}{(1-|x|^2)^{\frac{1}{2}}}:=n! \hbox{vol}(K)\frac{dx_1 \wedge \cdots \wedge dx_n}{(1- |x|^2)^{\frac{1}{2}}} .$$

The main result of this paper is a general formula for this
measure (see Theorem \ref{main1} and Corollary \ref{cor:mt}).

\begin{theorem}
\label{main} Let $K$ be a convex body and $V_K$ its
Siciak-Zaharjuta extremal function. The limit
\begin{equation}
\label{dblim} \delta_B^K(x,y)=\delta_B (x,y):=\lim_{t\to
0^+}{V_K(x+ity)\over t}
\end{equation}
exists for each $x\in K^o$ and $y\in \RR^n$ and for $x\in
K^o$
\begin{equation}
    \label{eq:monge}
    (dd^cV_K)^n=\lambda(x)dx  \ \hbox{where} \ \lambda(x)=n!{\rm vol} (\{y: \delta_B(x,y)\leq 1\}^*).
    \end{equation}
    Moreover, $(dd^cV_K)^n$ puts no mass on the boundary $\partial K$ (relative to $\RR^n$).
\end{theorem}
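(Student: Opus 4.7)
The statement comprises three parts: existence of the limit $\delta_B(x,y)$, the density formula for $(dd^cV_K)^n$ on $K^o$, and vanishing of $(dd^cV_K)^n$ on $\partial K$.

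\emph{Existence of the limit.} Fix $x\in K^o$ and $y\in\RR^n$. The slice $F(w):=V_K(x+wy)$ is a nonnegative subharmonic function on $\CC$ of logarithmic growth that vanishes on a real interval $I=[-a,b]\ni 0$, the real trace of $K$ on the complex line through $x$ in direction $y$, with $a,b>0$ since $x\in K^o$. The plan is a two-sided argument. For the upper bound, $F$ is dominated on this slice by the one-variable Siciak extremal function of $I$ viewed in $\CC$, whose explicit Joukowski form has a linear imaginary-direction profile at $0$, so $\limsup_{t\to 0^+}F(it)/t<\infty$. For the matching lower bound, I plan to use Siciak's polynomial representation $V_K(z)=\sup\{\frac{1}{\deg P}\log|P(z)|:\|P\|_K\le 1\}$: extracting the polynomials that saturate the supremum and tracking their leading-order behavior as $t\to 0^+$ should identify $\lim_{t\to 0^+}F(it)/t$ with a Bernstein-Markov/Baran factor, in particular showing it is finite and the two one-sided cluster values agree.

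\emph{Formula for $\lambda(x)$.} Once $\delta_B(x,\cdot)$ is available, positive $1$-homogeneity in $y$ is immediate from the definition, and convexity in $y$ follows from subharmonicity of $F$ combined with the vanishing of $V_K$ on the convex set $K$. Hence $\delta_B(x,\cdot)$ is a seminorm on $\RR^n$. Introduce the tangent psh function $u_x(z):=\delta_B(x,\Im z)$, translation-invariant in $\Re z$. A standard computation of the Monge-Amp\`ere of a function $z\mapsto\phi(\Im z)$ with $\phi$ a convex $1$-homogeneous function on $\RR^n$ identifies $(dd^cu_x)^n$ as a measure supported on $\RR^n$ with density $n!\,\vol(\{\delta_B(x,\cdot)\le 1\}^*)$ with respect to Lebesgue measure $dx$. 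To transfer this to $V_K$, rescale: set $V_K^{(\mu)}(z):=\mu^{-1}V_K(x+\mu z)$ and show $V_K^{(\mu)}\to u_x$ uniformly on compact subsets of $\CC^n$ as $\mu\downarrow 0$. By continuity of the complex Monge-Amp\`ere operator along uniformly bounded, locally uniformly convergent psh sequences (Bedford-Taylor), the density of $(dd^cV_K)^n$ at $x$ equals that of $(dd^cu_x)^n$ at $0$, yielding \eqref{eq:monge}.

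\emph{Vanishing on $\partial K$ and main obstacle.} For $(dd^cV_K)^n(\partial K)=0$, the plan is to verify $\int_{K^o}\lambda(x)\,dx=(2\pi)^n$ using the formula for $\lambda$; combined with the total mass $(2\pi)^n$ of $(dd^cV_K)^n$ and its support in $\overline K$, this forces no mass on $\partial K$. The integrated identity can be obtained either by continuity from the symmetric case (Bedford-Taylor) through an approximation of $K$ by symmetric convex bodies, or by direct computation exploiting the polar-duality structure of $\lambda$. The main obstacle I anticipate is the rescaling convergence $V_K^{(\mu)}\to u_x$ in the previous step: it requires precise control on the modulus of continuity of $V_K$ near $x$ in all complex directions---not only the imaginary one already encoded in $\delta_B$---and it is precisely here that the geometric input announced in the introduction, identifying the extremal inscribed ellipses from approximation theory with the maximal-area Monge-Amp\`ere ellipses, should be essential, supplying the matching lower bounds on $V_K^{(\mu)}$ that the one-dimensional slice comparison alone does not furnish.
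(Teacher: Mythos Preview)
Your plan has a genuine gap in the first step, and it is precisely the one the paper works hardest to close. Your two-sided argument for the existence of the limit does not close: the one-dimensional slice comparison $F\le V_I^{(1)}$ only gives $\limsup_{t\to 0^+}F(it)/t\le (\sqrt{ab})^{-1}$ with $I=[-a,b]$, while the polynomial representation only yields $\liminf_{t\to 0^+}F(it)/t\ge \delta_M(x,y)$. There is no mechanism in your outline forcing these two quantities to coincide. In fact, the equality $\delta_M=\delta_B$ is exactly Baran's conjecture, which the paper does \emph{not} prove; it is singled out at the very end as open even in $\RR^2$ (reduced there to triangles). So ``extracting the polynomials that saturate the supremum'' is not a step you can carry out here---if you could, you would be proving strictly more than Theorem~\ref{main}.

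The paper's route to the existence of the limit is completely different and bypasses $\delta_M$. Given $x\in K^o$ and $y$, one takes a $b$-maximal inscribed ellipse $\mathcal E$ through $x$ tangent to $y$. The geometric Proposition~\ref{maxarea} shows $\mathcal E$ is also $a$-maximal, hence its complexification is a leaf $f$ of the Monge--Amp\`ere foliation on which $V_K(f(\zeta))=\log|\zeta|$ exactly. Along the real curve $r\mapsto f(r)$ one reads off the curvilinear limit $1/b^*(x,y)$ immediately. The substantial work is then to show that the \emph{straight} limit $V_K(x+ity)/t$ agrees with this curvilinear one: for $K$ admitting a continuous foliation this is done by locating $x+ity$ on a nearby leaf and solving a quadratic for the leaf parameter, using continuity of the leaf data; general $K$ follow by sandwiching between dilates of nearby $K'\in\mathcal C$. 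Once the limit is known to exist and equal $1/b^*(x,y)$, continuity and the norm property come from the direct analysis of $b^*$ (Proposition~\ref{dbnorm}), and the density formula is read off from Baran's black-box Proposition~\ref{propbaran} rather than via your rescaling $V_K^{(\mu)}\to u_x$ (which is essentially a reproof of that proposition and, as you note, would in any case require the limit and its continuity as input). The boundary step is done as you suggest, by approximating $K$ from inside by real-analytic convex bodies and using monotone convergence of the densities to match the total mass $(2\pi)^n$.
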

\noindent Here, for a symmetric convex body $E$ in $\RR^n$,
$$E^*:=\{y\in \RR^n:x\cdot y\leq 1, \ \hbox{for all} \ x\in E\}$$
is also a symmetric convex body in $\RR^n$, called the polar of
$E$. The quantity $\delta_B(x,y)$ is continuous on $K^o \times
\RR^n$ and for each fixed $x\in K^o$, $y\to \delta_B(x,y)$ is a
norm on $\RR^n$; i.e., $\delta_B(x,y)\geq 0$; $\delta_B(x,\lambda
y)=\lambda \delta_B(x,y)$ for $\lambda \geq 0$; and
$\delta_B(x,y_1+y_2)\leq \delta_B(x,y_1)+ \delta_B(x,y_2)$ (see
\cite{bt}).

For a symmetric convex body, i.e., $K=-K$, Bedford and Taylor
\cite{bt} showed the existence of the limit (\ref{dblim}) and
proved the formula (\ref{eq:monge}) using the description of the
Monge-Amp\`ere solution given by Lundin \cite{lun}.  The present
paper relies on the description of $V_K$ given in \cite{blm},
\cite{blm2} for general convex bodies $K$. \cite{blm} showed the
existence, through each point $z \in \CC^n \setminus K$, of a
holomorphic curve on which $V_K$ is harmonic, while \cite{blm2}
showed that for many $K$ (all $K$ in $\RR^2$) these curves give a
continuous foliation of $\CC^n \setminus K$ by holomorphic curves.
It also showed that these curves are algebraic curves of degree 2,
and interpreted them in terms of a (finite dimensional)
variational problem among real ellipses contained in $K$.  The
real points of such a quadratic curve describe an ellipse within
$K$ of maximal area in its class of competitors. These competitor
classes are specified by the points $c$ on the hyperplane $H$
at infinity in $\PP^n$ through which the quadratic curves pass.
The geometry of these foliations is our main tool.

The norm $\delta_B(x,y)$ is also related to Bernstein-Markov
inequalities for real, multivariate polynomials on $K$. This will
be explained in section 3, specifically, in equation
(\ref{markov}) and the remarks after it. Conversely, a key role in
the proof of the main result Theorem \ref{main} is played by the
observation (Proposition \ref{maxarea}) that the extremal
inscribed ellipses appearing in a geometric approach -- the
``inscribed ellipse method" of Sarantopoulos \cite{sar},
\cite{revsar} --  to Bernstein-Markov inequalities are the maximal
area ellipses appearing in the determination of the Monge-Amp\`ere
solution as described above. A corollary of our main result is
that the inscribed ellipse method and the pluripotential-theoretic
method, due to Baran \cite{bar, bar2} for obtaining
Bernstein-Markov-type estimates are equivalent for all convex
bodies. This was straightforward for symmetric convex bodies. It
was proved for simplices and conjectured for the general case as ``Hypothesis A'' in
\cite{rev2}.

The remainder of the paper is organized as follows. In section
\ref{sec:rev} we recall in more detail some of the features of the
leaf structure for the Monge-Amp\`ere foliation. In section
\ref{sec:inscribed} we review the maximal inscribed ellipse
problem from \cite{sar}, \cite{revsar}, its relation to
Bernstein-Markov inequalities from approximation theory and to
the Monge-Amp\`ere maximal ellipses in section \ref{sec:rev}. We
also sketch the relation to the extremal function $V_K$ for
symmetric convex bodies \cite{bt}, \cite{bar2}. Finally, in
section \ref{sec:main}, using details of the Monge-Amp\`ere
foliation and its continuity, we prove the main results.

\section{Review of the variational problem.}
    \label{sec:rev}

\vskip 3mm

Let $K \subset \RR^n \subset \CC^n$ be a convex body, and consider
$\CC^n \subset \PP^n$, the complex projective space with $H :=
\PP^n \setminus \CC^n$ the hyperplane at infinity. Let $\sigma:
\PP^n \rightarrow \PP^n$ be the anti-holomorphic map of complex
conjugation, which preserves $\CC^n$ and $H$, and is the identity
on $\RR^n$. Let $H_{\RR}$ denote the real points of $H$ (fixed
points of $\sigma$ in $H$). For any non-zero vector $c \in \CC^n$,
let $\sigma(c) = \bar{c},$ and $[c] \in H$ the point in $H$ given
by the direction of $c$. If $[c] \neq [\bar{c}]$, then $c,
\bar{c}$ span a complex subspace $V \subset \CC^n$ of dimension
two, which is real, that is, invariant under $\sigma$; hence $V$
is the complexification of a two-dimensional real subspace $V_0
\subset \RR^n$. If we translate $V$ by a vector $A \in \RR^n$, we
get a complex affine plane $V + A$ invariant by $\sigma$ and
containing the real form $V_0 + A$, the fixed points of $\sigma$
in $V + A$. Associated to the point $[c] \in H$, we consider
holomorphic maps $f:
\triangle \rightarrow \PP^n$, $\triangle$ the unit disk in $\CC$,
such that $f(0) = [c]$, and $f(\partial \triangle) \subset K$.
Such maps can be extended by Schwarz reflection to maps (still
denoted) $f: \PP^1 \rightarrow \PP^n$ by the formula %%
\begin{equation}
    \label{eq:reality}
    f(\tau(\zeta)) = \sigma(f(\zeta)) \in \PP^n
\end{equation}
where $\tau: \PP^1 \rightarrow \PP^1$ is the inversion $\tau(\zeta) =
1/\bar{\zeta}.$ In particular, such maps have the form %%
\begin{equation}
    \label{eq:ansatz}
    f(\zeta) = \rho \frac{C}{\zeta} + A + \rho \bar C \zeta,
\end{equation}
where $[c] = [C],$ i.e.,
$C = \lambda c$, for some $\lambda \in \CC$, $A \in \RR^n$, and
$\rho
>0$. Then $f(\PP^1) \subset \PP^n$ is a quadratic curve, and
restricted to $\partial \triangle$, the unit circle in $\CC, f$
gives a parametrization of a real ellipse inside the planar convex
set $K \cap \{V_0 + A\}$, with center at $A$. According to
\cite{blm2}, the extremal function $V_K$ is harmonic on the
holomorphic curve $f(\triangle \setminus \{0\}) \subset \CC^n
\setminus K$ if and only if the area of the ellipse bounded by
$f(\partial \triangle)$ is {\em maximal} among all those of the
form (\ref{eq:ansatz}).

For a fixed, normalized $C$, this is equivalent to varying $A\in
\RR^n$ and $\rho >0$ among the maps in (\ref{eq:ansatz}) with
${\mathcal E}=f(\partial \triangle) \subset K$ in order to
maximize $\rho$. Fixing $C$ amounts to prescribing the orientation
(major and minor axis) and eccentricity of a family of inscribed
ellipses in $K$. We will call an extremal ellipse ${\mathcal E}$ a
{\sl maximal area ellipse}, or simply {\sl $a-$maximal}. In the
case where $\partial K$ contains no parallel faces, for each
$[c]\in H$ there is a unique $a-$maximal ellipse (Theorem 7.1,
\cite{blm2}); we denote the corresponding map by $f_c$. In this
situation, the collection of complex ellipses $\{f_c(\triangle
\setminus \{0\}) : [c] \in H\}$ form a continuous foliation of
$\CC^n \setminus K$. In simple terms, this means that if $z,z'$
are distinct points in $\CC^n \setminus K$, with $|z-z'|$ small,
lying on leaves $L(z):=f_c(\triangle \setminus \{0\})$ and
$L'(z):=f_{c'}(\triangle \setminus  \{0\})$, then the
corresponding leaf parameters in (\ref{eq:ansatz}) are close;
i.e., $C\sim C'$, $\rho \sim \rho'$ and $A\sim A'$ (and of course
$|\zeta_z| \sim |\zeta_{z'}|$ where $f_c(\zeta_z)=z$ and
$f_{c'}(\zeta_{z'})=z'$). Any convex body in $\RR^2$ admits a
continuous foliation; this follows from Proposition 9.2 or Theorem
10.2 in \cite{blm2}. Moreover, if we let ${\mathcal C}$ denote the
set of all convex bodies $K\subset \RR^n$ admitting a continuous
foliation, then ${\mathcal C}$ is dense in the Hausdorff metric in
the set ${\mathcal K}$ of all convex bodies $K\subset \RR^n$. This
follows, for example, from the fact that strictly convex bodies
$K$ belong to ${\mathcal C}$ (cf., Theorem 7.1 of  \cite{blm2}).
In addition, all symmetric convex bodies admit a continuous
foliation.

%%%We remark that
For convenience, instead of using the holomorphic curves
$f(\triangle \setminus \{0\})$ we will work with the holomorphic
curve $f(\CC \setminus \bar \triangle)$; thus $V_K$ being harmonic
on this curve means that
\begin{equation}\label{harmonic}
V_K(f(\zeta)) =\log |\zeta| \ \hbox{for} \ |\zeta|\geq 1.
\end{equation}

\section{Inscribed ellipse problem.}
    \label{sec:inscribed}

\vskip 3mm

Let $K\subset \RR^n$ be a convex body. Consider the following
geometric problem: fix $x\in K^o$ and a non-zero vector $y\in
\RR^n$ and consider all ellipses ${\mathcal E}$ lying in $K$ which
contain $x$ and have a tangent at $x$ in the direction $y$. We
write $y\in T_x{\mathcal E}$. That is, ${\mathcal E}={\mathcal
E}_b={\mathcal E}_b(x,y)$ is given by a parameterization
\begin{equation}
\label{theta} \theta \to r(\theta):=a\cos \theta + by\sin \theta
+(x-a)
\end{equation}
where $a\in \RR^n$ and $b\in \RR^+$ are such that $r(\theta)\in K$
for all $\theta$. {\it The problem is to maximize $b$ among all
such ellipses}. We will call such an ellipse a {\sl maximal
inscribed ellipse} (for $x,y$) or simply {\sl $b-$maximal}. Note
that $r(0)=x$ and $r'(0)=by$; thus one is allowed to vary $a$ and
$b$ in (\ref{theta}). Often we will normalize and assume that $y$
is a unit vector. An observation which will be used later is that
if we fix $a$, then ${\mathcal E}_b$ lies ``inside'' ${\mathcal
E}_{b'}$ if $b < b'$ with two common points $x$ and $x-2a$.

\vskip4pt

We give some motivation for studying this problem; this goes back
to Sarantapolous (cf., \cite{sar} or \cite{revsar}). For any such
ellipse ${\mathcal E}$, if $p$ is a polynomial of $n$ real
variables of degree $d$, say, with $||p||_K\leq 1$, then
$t(\theta):=p(r(\theta))$ is a trigonometric polynomial of degree
at most $d$ with $||t||_{[0,2\pi]}\leq ||p||_K \leq 1$ (since
${\mathcal E}\subset K$). By the Bernstein-Szeg\"o inequality for
trigonometric polynomials,
$${|t'(\theta)|\over \sqrt {||t||_{[0,2\pi]}^2-t(\theta)^2} }\leq d.$$
From the chain rule,
$$|t'(0)|=|\nabla p (x)\cdot r'(0)|= |D_{by}p(x)|=b|D_yp(x)|.$$
Thus
$$b|D_yp(x)|=|t'(0)|\leq  d\sqrt {||t||_{[0,2\pi]}^2-t(0)^2}\leq d\sqrt {1-p(x)^2};$$
i.e.,
\begin{equation}
\label{markov} {1\over \hbox{deg} \; p}{|D_yp(x)|\over \sqrt
{1-p(x)^2}}\leq {1\over b}.
\end{equation}
The left-hand-side is related to what we shall refer to as a {\it
Bernstein-Markov inequality}\footnote{For any univariate algebraic
polynomial of degree not exceeding $n$, the sharp uniform estimate
for the derivative $\|p'\|_{\infty,[-1,1]}\leq
n \|p\|_{\infty,[-1,1]}$ is due to Markov, %%%%% \cite{markov},
while the pointwise estimate $|p'(x)|\sqrt{1-x^2} \leq
n \|p\|_{\infty,[-1,1]}$ is known as Bernstein's Inequality, see
e.g. \cite{borer}, pages 232-233.
%%% and was actually first proved by Marcel Riesz \cite{riesz}.
In approximation theory, these types of derivative estimates -- or, in the
multivariate case, gradient and directional derivative
estimates -- are usually termed Bernstein and/or Markov type
inequalities.}: it relates the directional derivative of $p$ at
$x$ in the direction $y$ with the sup-norm of $p$ on $K$ (the
``1'' on the right-hand-side of (\ref{markov})) and the degree of
$p$. This motivates the definition of the {\it Bernstein-Markov
pseudometric} (cf., \cite{blw}): given $x\in K$, $y\in \RR^n$, let
$$\delta_M^K(x;y)=\delta_M(x;y):=\sup_{||p||_K\leq 1, \ {\rm deg} \; p \geq 1}{1\over {\rm deg} \; p}{|D_yp(x)|\over \sqrt {1-p(x)^2}}.$$
(This definition makes sense for general compacta in $\RR^n$).
Inequality (\ref{markov}) says that {\sl whenever you have an
inscribed  ellipse ${\mathcal E}_b={\mathcal E}_b(x,y)$ through
$x$ with tangent at $x$ in the direction of $y$, the number $1/b$
gives an upper bound on the Bernstein-Markov pseudometric}:
$$\delta_M(x;y) \leq 1/b.$$
The bigger you can make $b$, the better estimate you have.

\vskip4pt

Let
\begin{equation}
\label{defofb} b^*(x,y):=\sup \{b: {\mathcal E}_b(x,y) \subset
K\}.
\end{equation}
Note that $b^*(x,ty)=b^*(x,y)/t$ for $t>0$. In the symmetric case,
this is intimately related to $V_K$:

\begin{proposition}
\label{symmcase} If $K=-K$, then $\delta_M(x;y) ={1\over
b^*(x,y)}$. Moreover,
$$ \delta_M(x;y)  = \lim_{t\to 0^+} {V_K(x+ity)\over t}.$$
\end{proposition}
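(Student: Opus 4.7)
The plan is to split the statement into two claims: (I) $\delta_M(x;y) = 1/b^*(x,y)$, and (II) $\delta_M(x;y) = \lim_{t\to 0^+} V_K(x+ity)/t$.

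For (I), the bound $\delta_M(x;y) \leq 1/b^*(x,y)$ is immediate from (\ref{markov}) upon taking the supremum over inscribed ellipses. For the reverse inequality I would exploit symmetry by restricting to ellipses centered at the origin, i.e.\ by setting $a = x$ in (\ref{theta}) so that $r(\theta) = x\cos\theta + by\sin\theta$. Since $K = -K$ is convex, $K$ equals the intersection of all slabs $S_\ell = \{z : |\ell(z)| \leq 1\}$ as $\ell$ ranges over linear functionals with $\sup_K|\ell|\leq 1$, equivalently $\ell \in K^*$. A direct calculation shows $\mathcal{E}_b(x,y) \subset S_\ell$ iff $\ell(x)^2 + b^2\ell(y)^2 \leq 1$, so the best $b$ among centered inscribed ellipses, call it $b_0 \leq b^*(x,y)$, satisfies
$$\frac{1}{b_0} \;=\; \sup_{\ell \in K^*} \frac{|\ell(y)|}{\sqrt{1-\ell(x)^2}}.$$
On the other hand, composing the univariate Chebyshev polynomial $T_d$ with any $\ell \in K^*$ produces $p = T_d \circ \ell$ of degree $d$ with $||p||_K \leq 1$; the identity $|T_d'(u)|/\sqrt{1-T_d(u)^2} = d/\sqrt{1-u^2}$ yields Bernstein ratio exactly $|\ell(y)|/\sqrt{1-\ell(x)^2}$ after division by $d$. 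Hence $\delta_M(x;y) \geq 1/b_0$, and the chain $\delta_M \geq 1/b_0 \geq 1/b^* \geq \delta_M$ collapses to equalities throughout, proving (I) and the bonus geometric fact that in the symmetric case an $a$-optimal inscribed ellipse can be taken centered at $0$.

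For (II), I would invoke the representation of Baran \cite{bar2} (which extends Lundin's formula (\ref{eq:realball}) to arbitrary symmetric bodies)
$$V_K(z) \;=\; \sup_{\ell \in K^*} \log\bigl|\ell(z) + \sqrt{\ell(z)^2-1}\bigr|,$$
with branch of the square root chosen so that the summand is nonnegative and vanishes for $\ell(z) \in [-1,1]$. For fixed $\ell \in K^*$ and $x \in K^o$ (so $\ell(x) \in (-1,1)$), a direct expansion of $\sqrt{(\ell(x)+it\ell(y))^2-1}$ around $t=0$ gives
$$\lim_{t\to 0^+}\frac{1}{t}\log\bigl|\ell(x+ity)+\sqrt{\ell(x+ity)^2-1}\bigr| \;=\; \frac{|\ell(y)|}{\sqrt{1-\ell(x)^2}},$$
where the absolute value is legitimate because $-\ell \in K^*$ whenever $\ell \in K^*$. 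Interchanging this limit with the supremum over $\ell$ yields
$$\lim_{t\to 0^+}\frac{V_K(x+ity)}{t} \;=\; \sup_{\ell \in K^*}\frac{|\ell(y)|}{\sqrt{1-\ell(x)^2}} \;=\; \frac{1}{b^*(x,y)} \;=\; \delta_M(x;y)$$
by (I).

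The principal technical obstacle is the interchange of supremum and limit in the computation of $\lim V_K(x+ity)/t$. I would handle this by showing that the pointwise expansion $\Phi_\ell(t) = t\,|\ell(y)|/\sqrt{1-\ell(x)^2} + o(t)$ is uniform in $\ell$ over the compact polar body $K^*$, which follows because $x \in K^o$ keeps $\ell(x)$ uniformly bounded away from $\pm 1$ as $\ell$ varies in $K^*$. Equivalently, one may argue asymmetrically: the inequality $\lim V_K(x+ity)/t \geq \sup_\ell |\ell(y)|/\sqrt{1-\ell(x)^2}$ is trivial by testing individual $\ell$ inside the sup defining $V_K$, while the reverse inequality $\lim V_K(x+ity)/t \leq 1/b^*$ can be obtained directly from the inscribed-ellipse/harmonic-lift construction of Section \ref{sec:rev}, using (\ref{harmonic}) on the maximal centered ellipse to produce a harmonic majorant of $V_K$ with the correct first-order behavior at $x$.
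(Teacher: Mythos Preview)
Your proposal is correct and follows essentially the same route as the paper's sketch: both arguments rest on the Lundin--Baran representation $V_K(z)=\sup_{Z\in K^*}\log|h(z\cdot Z)|$, the first-order asymptotics of $\log|h|$ near $(-1,1)$ to obtain $\lim_{t\to 0^+}V_K(x+ity)/t=\sup_{Z\in K^*}|y\cdot Z|/\sqrt{1-(x\cdot Z)^2}$, and the identification of this supremum with $1/b^*(x,y)$ via centered (symmetric) ellipses together with a linear test polynomial for $\delta_M$. The only cosmetic differences are that you invoke $T_d\circ\ell$ (which for $d=1$ is exactly the paper's linear polynomial from the support strip) and that you justify the sup--limit interchange by a compactness/uniformity argument, whereas the paper quotes Baran's explicit two-sided bound $(1-\epsilon)\,|\beta|/\sqrt{1-\alpha^2}\le \epsilon^{-1}\log|h(\alpha+i\epsilon\beta)|\le |\beta|/\sqrt{1-\alpha^2}$, which is precisely the uniform estimate you need.
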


As in the introduction, define
\begin{equation}
\label{delta} \delta_B^K(x;y)=\delta_B(x,y):=\lim_{t\to 0^+}
{V_K(x+ity)\over t},
\end{equation}
provided this limit exists. For symmetric convex bodies $K$, the
proposition says that the limit does exist and we have
\begin{equation}
\label{db=dm} \delta_B(x,y) = \delta_M(x,y) ={1\over b^*(x,y)}.
\end{equation}
Moreover, for each fixed $x\in K^o$, the function $y\to {1\over
b^*(x,y)}$ is a norm (cf.,
Proposition \ref{dbnorm}). A proof of the existence of the  limit
was given by Bedford and Taylor \cite{bt}. We sketch an alternate
proof due to Baran \cite{bar2}.

\vskip4pt {\sl Step 1}: $V_K(z)=\sup \{\log |h(z\cdot Z))|: Z\in
K^*\}$ where $h(w)=w+\sqrt {w^2-1}$ is the standard Joukowski map
and
$$K^*:=\{Z:x\cdot Z\leq 1 \ \hbox{for all} \ x\in K\}$$ is the polar of $K$ (cf., \cite{lun}, or \cite{bar2}, Proposition 1.15). \vskip4pt

{\sl Step 2}: We have the following explicit estimates on $h$: if
$|\alpha|<1$, $|\beta|\leq \sqrt {1-|\alpha|}$, and $0<\epsilon
\leq 1/2$, then
$$(1-\epsilon){|\beta|\over \sqrt {1-\alpha^2}}\leq {1\over \epsilon}\log |h(\alpha +i\epsilon \beta)| \leq {|\beta|\over \sqrt {1-\alpha^2}}$$
(the inequality on the right-hand-side is valid without the
restriction $|\beta|\leq \sqrt {1-|\alpha|}$; cf., \cite{bar2}, Proposition 1.13). This states precisely
that $\log |h|$ is Lipschitz as you approach $(-1,1)$ vertically
and the Lipschitz constant grows like one-over-the-distance to the
boundary points. \vskip4pt

Now fix $x\in K^o$ and $y\in \RR^n$; then for any $Z\in K^*$ and
for $t>0$ small, since $(x+ity)\cdot Z= x\cdot Z+ity\cdot Z$,
\begin{equation}
\label{barest} (1-\epsilon){1\over t}{t|y\cdot Z|\over \sqrt
{1-(x\cdot Z)^2}}\leq {1\over t}\log |h((x+ity)\cdot Z)| \leq
{1\over t}{t|y\cdot Z|\over \sqrt {1-(x\cdot Z)^2}}.
\end{equation}
This gives
\begin{equation}
\label{vklim} \lim_{t\to 0^+} {V_K(x+ity)\over t}=\sup \{{|y\cdot
Z|\over \sqrt {1-(x\cdot Z)^2}}: Z\in K^*\}.
\end{equation}

\vskip4pt To relate this with $b^*(x;y)$, in the symmetric case,
the $b-$maximal ellipse is easily seen to be an $a-$maximal
ellipse (see the next proposition for a generalization of this),
and the linear polynomial $p$ that maps the support ``strip'' of
this ellipse to $[-1,1]$ (i.e., it maps one parallel support
hyperplane to $-1$ and the other to $+1$) is easily seen to give
$${1\over {\rm deg} \; p}{|D_yp(x)|\over \sqrt {1-p(x)^2}}={1\over b^*(x,y)}$$
so that we have the equality $\delta_M(x,y)={1\over b^*(x,y)}$.
Thus, the first part of the proposition is proved. Moreover, we
have the following formula for $b^*(x,y)$:
$$b^*(x;y)=\inf \{ {\sqrt {1-(x\cdot w)^2}\over |y\cdot w|}: w\in K^*\}. $$
To see this, in the symmetric case one considers symmetric
ellipses in (\ref{theta}), i.e., $a:=x$, and, from the definition
of $b^*(x;y)$ and $K^*$ we can write
$$b^*(x;y)=\sup \{b: \sup_{w\in K^*, \ t\in [0,2\pi]} |x\cos t\cdot w +yb\sin t\cdot w|=1\}$$
$$=\sup \{b: \sup_{w\in K^*} [(w\cdot x)^2+b^2(w\cdot y)^2]= 1\}.$$
Basically unwinding things shows that this is the reciprocal of
(\ref{vklim}). For details we refer the reader to \cite{rev} or
\cite{blw}.

\vskip4pt

A  key geometric observation which will be used in the next
section is  the following.

\begin{proposition}
\label{maxarea} For any convex body $K$, a $b-$maximal ellipse
${\mathcal E}$ is also an $a-$maximal ellipse.

\end{proposition}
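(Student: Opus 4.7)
The plan is to view both $b$- and $a$-maximality as convex (linear) semi-infinite programs and to transfer the Karush--Kuhn--Tucker multipliers from the former to the latter. First I would establish the KKT conditions for $b$-maximality: writing $r(\theta) = (x - a) + a\cos\theta + by\sin\theta$, the optimization over $(a, b)$ subject to $r(\theta) \in K$ for all $\theta$ is linear in $(a, b)$, with active constraint set $\Theta = \{\theta : r(\theta) \in \partial K\}$ and outward unit normals $\nu(\theta)$ to $K$ at $r(\theta)$. Since $x = r(0) \in K^o$, the set $\Theta$ avoids $\theta = 0$. KKT then gives a non-negative Radon measure $\mu$ on $\Theta$ satisfying $\int (\cos\theta - 1)\nu(\theta)\, d\mu = 0$ (stationarity in $a$) and $\int \sin\theta\,(\nu(\theta)\cdot y)\, d\mu = 1$ (normalized stationarity in $b$). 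The inscription of $\mathcal{E}$ in $K$ also forces the tangency relation $(\nu(\theta)\cdot a)\sin\theta = b(\nu(\theta)\cdot y)\cos\theta$ at every $\theta \in \Theta$, since such $\theta$ is a local maximum of $\theta \mapsto \nu(\theta)\cdot r(\theta)$.

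I would then set up the KKT conditions for $a$-maximality at the shape $C = (a + iby)/(2\rho)$ of $\mathcal{E}$ (with $\rho = |a + iby|/2$): they ask for a non-negative measure $\mu'$ on $\Theta$ with $\int \nu\, d\mu' = 0$ (stationarity in the center) and $\int \nu\cdot(a\cos\theta + by\sin\theta)\, d\mu' > 0$ (stationarity in the scale). The natural candidate is $d\mu' := (1 - \cos\theta)\, d\mu$, which is non-negative and positive on $\mathrm{supp}(\mu)$ since $\Theta$ excludes $\theta = 0$. The first $a$-KKT condition is then immediate from the first $b$-KKT condition. The second is the crux: I would substitute the tangency relation pointwise to reexpress the integrand, then use the identities $\sin\theta = (1 + \cos\theta)\tan(\theta/2)$ and $(1 - \cos\theta)/\sin\theta = \tan(\theta/2)$ together with the first $b$-KKT condition contracted against $a$; the trigonometry collapses the integral to exactly $b > 0$. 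A separate accounting is needed at the degenerate locus $\theta = \pi$ (where $\sin\theta$ vanishes and the tangency forces $\nu\cdot y = 0$), but the extra terms there cancel cleanly between the two halves of the identity.

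Once both $a$-KKT conditions are verified, I would appeal to convexity of the $a$-maximization problem --- the feasible set $\{(A, \rho) : A + 2\rho(\alpha\cos\theta + \beta\sin\theta) \in K\ \forall \theta\}$ is an intersection of linear half-spaces in $(A, \rho)$, and the objective $\rho$ is linear --- to conclude that the KKT conditions are sufficient for global optimality, so $\mathcal{E}$ is $a$-maximal in its shape class. I expect the main obstacle to be the trigonometric bookkeeping in the middle step, specifically the cancellation involving $\theta = \pi$ where the tangency substitution degenerates; this requires using the full content of both $b$-KKT conditions in tandem, not one in isolation.
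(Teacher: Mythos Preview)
Your proof is correct and takes a genuinely different route from the paper's. The paper gives a short, purely geometric argument: it characterizes $a$-maximality by the property that no translate of $\mathcal{E}$ lies entirely in $K^o$; then, assuming a $b$-maximal $\mathcal{E}$ is not $a$-maximal, it explicitly constructs an ellipse $\tilde{\mathcal{E}}(\epsilon)$ through $x$ with tangent direction $y$ and the same parameter $b$ that lies in $K^o$ (each point of $\tilde{\mathcal{E}}(\epsilon)$ sits on a small translate $\mathcal{E}+s_\theta v$ of $\mathcal{E}$), and this $\tilde{\mathcal{E}}(\epsilon)$ can then be fattened in $b$, contradicting $b$-maximality.

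Your approach via KKT is more structural: you extract the dual certificate $\mu$ for the $b$-problem and re-weight it by $(1-\cos\theta)$ to manufacture a certificate for the $a$-problem. The core computation does go through cleanly. Writing $p=\nu\cdot a$, $q=\nu\cdot y$, the tangency relation $p\sin\theta=bq\cos\theta$ gives $bq\sin\theta\cos\theta=p\sin^2\theta$ \emph{pointwise} (including trivially at $\theta=\pi$, so the separate accounting you anticipate there is in fact unnecessary); then
\[
\int (p\cos\theta+bq\sin\theta)(1-\cos\theta)\,d\mu
=\int p(\cos\theta-1)\,d\mu+b\int q\sin\theta\,d\mu=0+b>0,
\]
using the two $b$-KKT relations. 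One point you should state explicitly is the constraint qualification that guarantees the existence of $\mu$: Slater holds for the $b$-problem (take $a'=0$, $b'>0$ small so the ellipse lies in $K^o$), and the index set of constraints --- pairs $(\theta,\nu)$ with $\nu$ in the normal cone of $K$ at $r(\theta)$ --- is compact with continuous data, so the KKT measure exists by standard semi-infinite LP duality. Your implicit assumption of a unique normal $\nu(\theta)$ is inessential, since the tangency relation holds for every $\nu$ in the normal cone at $r(\theta)$.

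The paper's argument is shorter and requires no optimization machinery; yours buys an explicit dual map $d\mu'=(1-\cos\theta)\,d\mu$ linking the two extremal problems, which is informative in its own right.
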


\begin{proof} First observe that an $a-$maximal ellipse ${\mathcal E}$ is characterized by  the property that {\it no translate of ${\mathcal E}$ lies entirely in the interior $K^o$ of $K$}. For if ${\mathcal E}+v\subset K^o$ for some $v\not =0$, one can dilate ${\mathcal E}+v$ to get an ellipse with the same orientation and eccentricity as ${\mathcal E}$ which lies in $K$ but has larger area. Conversely, if ${\mathcal E}$ is not an $a-$maximal ellipse, then one can find an ellipse ${\mathcal E}'$ with the same orientation and eccentricity as ${\mathcal E}$ which lies in $K$ but has larger area. The convex hull $H$ of ${\mathcal E}\cup{\mathcal E}'$ lies in $K$ and we can translate ${\mathcal E}$ within $H$ to an ellipse ${\mathcal E}''$ lying in the two-dimensional surface $S({\mathcal E}')$ determined by ${\mathcal E}'$; if ${\mathcal E}''$ does not lie in the ``interior'' of $S({\mathcal E}')$, we simply translate it within this surface (since the area of ${\mathcal E}'$ !
 is greater than that of ${\mathcal E}''$) until it does.

Indeed, we need a slightly more precise statement: ${\mathcal E}$
is not an $a-$maximal ellipse if and only if there is a unit
vector $v$ and $\delta >0$ such that ${\mathcal E}+sv\subset K^o$
for $0<s < \delta$, i.e., all translates by a small amount in some
direction stay in $K^o$. This follows from the previous paragraph
if we observe the following fact: if $K$ is a convex body,
$u\in K$ and $u+v\in \partial K^o$, then the entire half-open
segment $(u,u+v]$ lies in $K^o$.

Suppose that ${\mathcal E}$ given by
$$\theta \to a\cos \theta + by\sin \theta +(x-a)$$
is a $b-$maximal ellipse for $x,y$. For the sake of obtaining a
contradiction, we assume that ${\mathcal E}$ is not an $a-$maximal
ellipse. By the previous paragraph, we can find a nonzero vector
$v$ and $\delta>0$ so that ${\mathcal E}_s:={\mathcal E}+sv$ lies
in $K^o$ for $0<s < \delta$. For $0<\epsilon <\delta/2$, consider
the ellipse $\tilde {\mathcal E}(\epsilon)$ given by
\begin{equation}
\label{repsilon} r_{\epsilon}(\theta)= (a-\epsilon v)\cos \theta +
by\sin \theta +x-(a-\epsilon v).
\end{equation}
We claim that $\tilde {\mathcal E}(\epsilon) \subset K^o$.
Assuming this is the case, note that $r_{\epsilon}(0)=x\in \tilde
{\mathcal E}(\epsilon)$ and $r_{\epsilon}'(0)=by$; in particular,
the ``$b$'' for $\tilde {\mathcal E}(\epsilon)$ is the same as the
``$b$'' for ${\mathcal {\mathcal E}}$. Since $\tilde {\mathcal
E}(\epsilon) \subset K^o$, we can modify $\tilde {\mathcal
E}(\epsilon)$ to an ellipse $\tilde {\mathcal E}(\epsilon)'$
containing $x$ and lying in $K$ by replacing $b$ in
(\ref{repsilon}) by $b'>b$ contradicting the assumption that
${\mathcal E}$ is a $b-$maximal ellipse for $x,y$.

\vskip4pt To verify that $\tilde {\mathcal E}(\epsilon) \subset
K^o$, observe that for each fixed $\theta$, the point
$$(a-\epsilon v)\cos \theta + by\sin \theta +x-(a-\epsilon v)$$
$$=a\cos \theta + by\sin \theta +(x-a)+\epsilon v(1-\cos \theta)$$
on $\tilde {\mathcal E}(\epsilon)$ lies on the ellipse ${\mathcal
E}_{s_{\theta}}:={\mathcal E} +\epsilon (1-\cos \theta)v$ where
$s_{\theta}=\epsilon (1-\cos \theta)\leq 2\epsilon <\delta$. Thus
$\tilde {\mathcal E}(\epsilon) \subset K^o$.

\end{proof}

For use in the next section, we prove some results about the
function $b^*(x,y)$.

\begin{proposition}
\label{dbnorm} For a convex body $K\subset \RR^n$, $b^*(x,y)$
defined in (\ref{defofb}) is a continuous function of $x\in K^o$
and $y\in \RR^n$. Moreover, for each fixed $x\in K^o$, $y\to
1/b^*(x,y)$ is a norm in $\RR^n$.
\end{proposition}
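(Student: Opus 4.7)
The plan is to verify the three norm axioms for $\|y\|:=1/b^*(x,y)$ and then establish joint continuity of $b^*$. Nonnegativity is immediate, and $\|y\|>0$ for $y\ne 0$ follows from the bound $b|y|\le \tfrac12\mathrm{diam}(K)$ obtained by evaluating (\ref{theta}) at $\theta=\pm\pi/2$. Absolute homogeneity follows because scaling $y$ by $t>0$ in (\ref{theta}) is absorbed into $b$, giving $b^*(x,ty)=b^*(x,y)/t$, while $b^*(x,-y)=b^*(x,y)$ since substituting $\theta\mapsto -\theta$ shows that the parametrizations with $\pm y$ trace the same point set.

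The main content is the triangle inequality. For given $x,y_1,y_2$, let $\mathcal{E}_i\subset K$ be a $b$-maximal ellipse for $(x,y_i)$ with parameters $(a_i,\beta_i)$, $\beta_i=b^*(x,y_i)$ (existence follows from compactness of the admissible set of $(a,b)$; alternatively, one works with $\beta_i-\varepsilon$ and sends $\varepsilon\to 0$ at the end). Set $\lambda:=\beta_2/(\beta_1+\beta_2)$, $a:=\lambda a_1+(1-\lambda)a_2$, and $b:=\beta_1\beta_2/(\beta_1+\beta_2)$; the crucial identity is $\lambda\beta_1=(1-\lambda)\beta_2=b$. A direct computation shows
$$\lambda\,p_1(\theta)+(1-\lambda)\,p_2(\theta)\;=\;a\cos\theta+b(y_1+y_2)\sin\theta+(x-a),$$
where $p_i(\theta):=a_i\cos\theta+\beta_i y_i\sin\theta+(x-a_i)$ parametrizes $\mathcal{E}_i$ as in (\ref{theta}). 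The left-hand side is a convex combination of points of $K$ and hence lies in $K$, so the right-hand side is an admissible ellipse for $(x,y_1+y_2)$ with $b$-parameter $b$. This gives $b^*(x,y_1+y_2)\ge \beta_1\beta_2/(\beta_1+\beta_2)$, which is precisely $1/b^*(x,y_1+y_2)\le 1/\beta_1+1/\beta_2$.

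For joint continuity at $(x_0,y_0)\in K^o\times(\RR^n\setminus\{0\})$, I argue upper and lower semicontinuity separately. USC: if $(x_n,y_n)\to(x_0,y_0)$, then any optimal $(a_n,b_n)$ for $(x_n,y_n)$ satisfies $|a_n|,\,b_n|y_n|\le \mathrm{diam}(K)$, so a subsequence converges to $(a_0,B)$; by closedness of $K$, the limit ellipse lies in $K$, giving $b^*(x_0,y_0)\ge B=\lim b^*(x_n,y_n)$. LSC: take a near-optimal admissible $\mathcal{E}_0$ for $(x_0,y_0)$ with parameters $(a_0,b_0)$. For $t\in(0,1)$ the curve $r_t(\theta):=(1-t)x_0+tr_0(\theta)$ is an admissible ellipse for $(x_0,y_0)$ with parameters $(ta_0,tb_0)$, lying in $K^o$ at distance $\ge(1-t)\mathrm{dist}(x_0,\partial K)$ from $\partial K$ (by convexity of $K$ and $x_0\in K^o$). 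The perturbed ellipse $ta_0\cos\theta+tb_0 y_n\sin\theta+(x_n-ta_0)$ differs from $r_t$ by $O(|x_n-x_0|+|y_n-y_0|)$ uniformly in $\theta$ and so stays in $K$ for $n$ large; hence $\liminf b^*(x_n,y_n)\ge tb_0$, and letting $t\to 1$ and then $b_0\to b^*(x_0,y_0)$ finishes. At $y_0=0$, where $b^*=\infty$, continuity follows from $b^*(x,y)\ge \mathrm{dist}(x,\partial K)/(2|y|)$, obtained by inscribing a disk of radius $\tfrac12\mathrm{dist}(x,\partial K)$ normal to $y$. The triangle inequality — specifically, guessing the convex combination of the two maximal ellipses with weights proportional to the reciprocals of the $\beta_i$ — is the only step with nontrivial content; the remaining arguments are routine compactness and convex-geometric manipulations.
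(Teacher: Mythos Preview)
Your proof is correct and follows essentially the same approach as the paper. In particular, the triangle inequality argument---forming the convex combination $\lambda r_1(\theta)+(1-\lambda)r_2(\theta)$ with $\lambda=\beta_2/(\beta_1+\beta_2)$---is identical to the paper's, as is the upper-semicontinuity argument via compactness of the ellipse parameters. Your lower-semicontinuity argument, which contracts the near-optimal ellipse toward $x_0$ via $r_t(\theta)=(1-t)x_0+t\,r_0(\theta)$ to push it a definite distance into $K^o$ before perturbing $(x,y)$, is a slight variant of the paper's approach (which instead splits into cases according to whether $x-2a\in K^o$ and, if not, shifts $a$ along the direction $x-a$); your version is arguably cleaner since it avoids the case distinction.
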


\begin{proof} For the continuity of $b^*(x,y)$, we first verify uppersemicontinuity of this function. Fix a convex body $K$ and fix $x\in K^o$ and $y\in \RR^n$. Let $\{x_j\}\subset K^o$ with $x_j \to x$ and $\{y_j\}\subset \RR^n$ with $y_j \to y$. Let
$$r_j(\theta)= a_j \cos \theta +b^*(x_j,y_j)y_j\sin \theta+(x_j-a_j)$$
parameterize a $b-$maximal ellipse ${\mathcal E}_j$ for $K$
through $x_j$ in the direction $y_j$. Take a subsequence $\{j_k\}$
of positive integers so that the numbers
$\{b^*(x_{j_k},y_{j_k})\}$ converge to a number $\tilde b$; and
take a further subsequence (which we still call $\{j_k\}$) so that
the vectors $\{a_{j_k}\}\subset \RR^n$ converge to $a\in \RR^n$.
Consider the ellipse ${\mathcal E}$ where
$$r(\theta)= a\cos \theta+\tilde by\sin \theta+(x-a).$$
Since $x_{j_k}\to x, \ y_{j_k}\to y, \ b^*(x_{j_k},y_{j_k}) \to
\tilde b$ and $a_{j_k} \to a$, the functions $r_{j_k}$ converge
uniformly to $r$ (equivalently, the ellipses ${\mathcal E}_{j_k}$
converge in the Hausdorff metric to ${\mathcal E}$). Thus
${\mathcal E}$ is an inscribed ellipse for $K$ through $x$ in the
direction of $y$; hence $\tilde b \leq b^*(x,y)$; i.e.,
$$\limsup_{x'\to x, \ y'\to y} b^*(x',y')\leq b^*(x,y).$$

To verify lowersemicontinuity of $b^*(x,y)$, we fix $x\in K^o, \
y\in \RR^n$ and $b'< b^*(x,y)$, and we show there is a $\delta >0$
such that for all  $|x'-x| < \delta, \ |y'-y| < \delta$ there is
an inscribed ellipse ${\mathcal E}'$ through $x'$ with tangent
direction $y'$ of the form
$$\theta \to a'\cos \theta+b'y'\sin \theta+(x'-a').$$

Let ${\mathcal E}$ be a $b-$maximal ellipse through $x$ in the
direction $y$ given by
$$r(\theta)= a\cos \theta+b^*(x,y)y\sin \theta+(x-a).$$
If $x-2a\in K^o$, then for $b'<b$ the ellipse ${\mathcal E}_{b'}$
$$r_{b'}(\theta)= a\cos \theta+b'y\sin \theta+(x-a)$$
lies fully in $K^o$ (for ${\mathcal E}_{b'}$ lies entirely
``inside'' of ${\mathcal E}$ except for the common points
$x,x-2a$, and $x\in K^o$). Then any sufficiently small
translation ${\mathcal E}'$
$$\theta \to a\cos \theta+b'y\sin \theta+(x'-a)$$
of ${\mathcal E}_{b'}$ by $x'-x$ keeps ${\mathcal E}'$ in $K^o$;
hence  replacing $y$ by $y'$ sufficiently close to $y$ yields
${\mathcal E}''$
$$\theta \to a\cos \theta+b'y'\sin \theta+(x'-a)$$
in $K$.

If $x-2a\not \in K^o$, we first modify ${\mathcal E}_{b'}$ to
${\mathcal E}_{b',a'}$:
$$r_{b',a'}(\theta)= a'\cos \theta+b'y\sin \theta+(x-a')$$
with $a'-a=\delta(a-x)$ with $\delta>0$ sufficiently small so
that ${\mathcal E}_{b',a'}\subset K^o$. This is possible since the
vectors
$$
r_{b',a'}(\theta)-r_{b'}(\theta)=(a-a')(1-\cos \theta)
=\delta(1-\cos\theta) (x-a)
$$
point in the same direction for all $\theta$. Note that
$$r_{b',a'}(0)=x \ \hbox{and} \ r_{b',a'}'(0)=b'y$$
so that once again any sufficiently small translation ${\mathcal
E}'$
$$\theta \to a'\cos \theta+b'y\sin \theta+(x'-a')$$
of ${\mathcal E}_{b',a'}$ by $x'-x$ keeps ${\mathcal E}'$ in
$K^o$. Again, replacing $y$ by $y'$ sufficiently close to $y$
yields ${\mathcal E}''$
$$\theta \to a'\cos \theta+b'y'\sin \theta+(x'-a')$$
in $K$. This completes the proof that $(x,y)\to b^*(x,y)$ is
continuous.

To show that $y\to 1/b^*(x,y)$ is a norm, observe that
$b^*(x,\lambda y)={1\over \lambda}b^*(x,y)$ for $\lambda >0$ and
$0<b^*(x,y) < +\infty$ if $y\not =0\in \RR^n$. Thus
$b^*(x,0)=+\infty$ so that $1/b^*(x,y)\geq 0$ with equality
precisely when $y=0$; and $1/b^*(x,\lambda y)=\lambda /b^*(x,y)$
for $\lambda \geq 0$. To verify subaddititivity in $y$, fix $x\in
K^o$ and $y_1,y_2\in \RR^n$. Let ${\mathcal E}_1$ and ${\mathcal
E}_2$ be $b-$maximal ellipses through $x$ in the directions $y_1$
and $y_2$ as in (\ref{theta}):
$$
\theta \to r_j(\theta):=a_j\cos \theta + b_jy_j\sin \theta
+(x-a_j), \ j=1,2;
$$
here, $b_j:=b^*(x,y_j)$. Consider the convex combination
$${b_2\over b_1+b_2}r_1(\theta)+{b_1\over b_1+b_2}r_2(\theta)$$
$$={a_1b_2\over b_1+b_2}\cos \theta + {b_1b_2\over b_1+b_2}y_1\sin \theta +{b_2\over b_1+b_2}(x-a_1)$$
$$+{a_2b_1\over b_1+b_2}\cos \theta + {b_1b_2\over b_1+b_2}y_2\sin \theta +{b_1\over b_1+b_2}(x-a_2)$$
$$={a_1b_2+a_2b_1\over b_1+b_2}\cos \theta + {b_1b_2\over b_1+b_2}(y_1+y_2)\sin \theta +x-{a_1b_2+a_2b_1\over b_1+b_2}.$$
By convexity, this ellipse, through $x$ in the direction
$y_1+y_2$, lies in $K$ so that
$$b^*(x,y_1+y_2) \geq {b_1b_2\over b_1+b_2}.$$
Unwinding, this says that
$${1\over b^*(x,y_1+y_2)} \leq {1\over b_1} + {1\over b_2},$$
as desired.
\end{proof}

For future use, we mention that in $\RR^2$, if ${\mathcal E}$ is
an $a-$maximal ellipse for $K$, then either

\begin{enumerate}
\item ${\mathcal E}\cap \partial K$
contains exactly two points $a_1,a_2$, in which case the tangent
lines to ${\mathcal E}$ at $a_1,a_2$ are parallel and determine a
strip $S$ containing $K$ and ${\mathcal E}$ is an $a-$maximal
ellipse for any  rectangular truncation $T$ of $S$ with ${\mathcal
E}\subset K\subset T$; or
\item ${\mathcal E}\cap \partial K$ contains $m\geq 3$ points $a_1,...,a_m$, in which case either a subset of three
points from $\{a_1,...,a_m\}$ can be found so that the tangent
lines to ${\mathcal E}$ at these three points bound a triangle $T$
containing $K$ and ${\mathcal E}$ is an $a-$maximal ellipse for
$T$, or a rectangular truncation $T$ of a strip $S$ with
${\mathcal E}\subset K\subset T$ can be found so that ${\mathcal
E}$ is an $a-$maximal ellipse for $T$.
\end{enumerate}

\section{Main result.}
    \label{sec:main}

\vskip 3mm

For any compact set $K\subset \RR^n$ with non-empty interior, take $x\in K^o$ and $y\in \RR^n\setminus \{0\}$. Then we always have the pointwise
inequality
\begin{equation}
\label{dbdm} \delta_M(x,y)\leq \delta_B^{(i)}(x,y):=\liminf_{t\to
0^+} {V_K(x+ity)\over t}.
\end{equation}
This follows from Proposition 2.1 in \cite{blw}. In particular,
this inequality holds for any convex body $K$, with equality in
case $K$ is symmetric (as we saw in the previous section). In this section, we prove that the limit  $\lim_{t\to
0^+} {V_K(x+ity)\over t}$ exists and equals $1/b^*(x,y)$. This verifies   ``Hypothesis A'' in
\cite{rev2} for convex bodies $K\subset \RR^n$, i.e., the
inscribed ellipse method and the pluripotential-theoretic method
for obtaining Bernstein-Markov-type estimates for convex bodies
are equivalent.

Let $K$ be an arbitrary convex body in $\RR^n$. Fix $x\in K^o$ and
$y\in \RR^n\setminus \{0\}$. Take a $b-$maximal ellipse ${\mathcal
E}$ through $x$ with tangent direction $y$ at $x$. We will
normalize and assume that $y$ is a unit vector; moreover, it will
be convenient to have the center at $a$ instead of $x-a$. Thus we
write
\begin{equation}
\label{thetaparam} \theta \to r(\theta)=(x-a)\cos \theta +
b^*(x,y)y\sin \theta +a, \ \theta \in [0,2\pi]
\end{equation}
This is an $a-$maximal ellipse ${\mathcal E}$ by Proposition
\ref{maxarea}; i.e., ${\mathcal E}$ forms the real points of a
leaf $L$
\begin{equation}
\label{leaf} f(\zeta)= (x-a) [{1\over 2}(\zeta
+1/\zeta)]+b^*(x,y)y[{i\over 2}(\zeta - 1/\zeta)]+a, \ |\zeta|\geq
1
\end{equation}
of our foliation for the extremal function $V_K$. We can compare
this ``$b-$maximal'' form of the leaf with its $a-$maximal form
(\ref{eq:ansatz}):
\begin{equation}
\label{leafa} f(\zeta)= A +c\zeta +\bar c/\zeta, \ |\zeta|\geq 1,
\end{equation}
where, for simplicity, we write $c:=\rho C$ in (\ref{eq:ansatz}).
Thus, from (\ref{harmonic}), $V_K(f(\zeta)) =\log |\zeta|$ for
$|\zeta|\geq 1$.

 In these coordinates $V_K(f(\zeta))=\log {|\zeta|}$. We first show that
$$\lim_{r\to 1^+}{f(r)-f(1)\over r-1} = ib^*(x,y)y.$$
This follows from the calculation
$$f(r)-f(1)= (x-a)\bigl(\frac{(r-1)^2}{2r}\bigr)+ib^*(x,y)\frac{(r-1)(r+1)}{2r}.$$

Thus the real tangent vector to the real curve $r\to f(r), \ r\geq
1$ as $r\to 1^+$ is in the direction $ib^*(x,y)y$. Now $f(1)=x$
and $x\in K$ so $V_K(f(1))=V_K(x)=0$; and, since $f$ is a leaf of
our foliation, $V_K(f(r))=\log r$. Hence
$${V_K(f(r))-V_K(f(1))\over r-1}={\log r \over r-1}$$ so that
$$ \lim_{r\to 1^+}{V_K(f(r))-V_K(f(1))\over r-1}=\lim_{r\to 1^+}{\log r \over r-1}$$
exists and equals $1$. This elementary calculation shows that for
any convex body $K\subset \RR^n$,
\begin{equation}
\label{curvelim} \lim_{r\to 1^+}{V_K(f(r))-V_K(f(1))\over
b^*(x,y)(r-1)}= {1\over b^*(x,y)};
\end{equation}
i.e., the curvilinear limit along the curve $f(r)$ in the
direction of $iy$ at $x$ exists and equals ${1\over b^*(x,y)}$.

Note that
$$f(r)-f(1)=f(r)-x=ib^*(x,y)y(r-1)+0((r-1)^2),$$
so that the point $x+ib^{*}(x,y)y(r-1)$ is $O((r-1)^2)$ close to
the point $f(r)$. We use the explicit form (\ref{leaf}) of the
leaf to verify the existence of the limit in the directional
derivative $\delta_B(x,y)$.

\begin{theorem}
\label{main1} Let $K$ be a convex body in $\RR^n$. Then the limit
in the definition of the directional derivative exists and equals
${1\over b^*(x,y)}$:
$$\delta_B(x,y):=\lim_{t\to 0^+}{V_K(x+ity)\over t}={1\over b^*(x,y)}.$$
\end{theorem}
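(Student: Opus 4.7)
My plan is to derive the theorem from the curvilinear limit (\ref{curvelim}) by a bridging estimate that transfers it to the rectilinear limit along $t\mapsto x+ity$.

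\textbf{Setup and reduction.} Normalize $y$ to a unit vector. By Proposition \ref{maxarea} the $b$-maximal ellipse through $x$ with tangent direction $y$ is also $a$-maximal, so it is the real slice of a leaf $f$ of the form (\ref{leaf}) with $V_K(f(\zeta))=\log|\zeta|$ for $|\zeta|\ge 1$. Setting $r(t):=1+t/b^*(x,y)$, Taylor expansion of (\ref{leaf}) at $\zeta=1$ gives
$$
f(r(t))\;=\;x+ity+\tfrac{t^2}{2b^*(x,y)^2}(x-a)-\tfrac{it^2}{2b^*(x,y)}y+O(t^3),
$$
while $V_K(f(r(t)))=\log r(t)=t/b^*(x,y)+O(t^2)$. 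Writing $f(r(t))=x''+it''y$ exhibits $x''-x$ as a purely real $O(t^2)$ vector in the direction $x-a$, while $t''/t\to 1$. Thus the theorem reduces to proving the bridging estimate
$$
V_K(x+it''y)-V_K(x''+it''y)\;=\;o(t''),\qquad t''\to 0^+.
$$

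\textbf{Bridging step.} The two points compared differ only by the real $O(t''^2)$ translation $x-x''$ of the base point at height $t''$ above $K^o$; equivalently, $V_K(x''+it''y)=V_{K_t}(x+it''y)$ for the translated body $K_t:=K-(x''-x)$, which is at Hausdorff distance $O(t''^2)$ from $K$. I would establish the $o(t'')$ bound by combining (i) continuity of $b^*(\cdot,y)$ in its base point (Proposition \ref{dbnorm}) and continuity of the Monge-Amp\`ere foliation, so that the curvilinear argument repeated at the perturbed base point $x''$ still yields the rate $1/b^*(x,y)+o(1)$; and (ii) subharmonicity of the $1$-variable function $w\mapsto V_K(x+wy)$ on $\CC$, which vanishes on the real interval $K\cap(x+\RR y)$ and whose variation under a small real translation of the base point can therefore be controlled at height $t''$.

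\textbf{Main obstacle.} The chief difficulty is the $o(t)$ control in step (ii): the generic stability of $V_K$ under Hausdorff perturbation of $K$ is only H\"older-$\tfrac12$, yielding the too-weak bound $O((t^2)^{1/2})=O(t)$. The fix is to exploit the specifically vertical geometry of the approach $x+ity\to x\in K^o$: along this direction $V_K$ grows only linearly in $t$ while vanishing identically on $K^o$, so its variation under a real translation of size $O(t^2)$ is genuinely of order $o(t)$ rather than the naive $O(t)$. Once the bridging estimate is in hand, combining with the curvilinear limit yields $\lim_{t\to 0^+}V_K(x+ity)/t=1/b^*(x,y)$, proving $\delta_B(x,y)=1/b^*(x,y)$ as claimed.
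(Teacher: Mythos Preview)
Your reduction to a bridging estimate is the natural first move, and your Taylor expansion of $f(r(t))$ is correct: $f(r(t))=x''+it''y$ with $x''-x$ a real $O(t^{2})$ vector in the direction $x-a$ and $t''/t\to 1$. The gap is that you do not actually prove the bridging estimate, and neither of your suggested mechanisms closes it. For (i), repeating the curvilinear argument at the perturbed base point $x''$ only tells you the value of $V_K$ along the leaf through $x''$; the point $x''+it''y$ lies on that leaf only to first order, so you are back to the same $O(t^{2})$-off-leaf discrepancy you started with. For (ii), the two points $x+it''y$ and $x''+it''y$ do \emph{not} sit on a common complex line through $x$ in $\CC^n$ (their difference is the real vector $x-x''$, parallel to $x-a$, not to $y$), so the one-variable subharmonic picture is not directly available. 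Rewriting the comparison as $V_K$ versus $V_{K_t}$ at the single point $x+it''y$, with $d_H(K,K_t)=O(t^{2})$, runs into exactly the obstacle you name: the generic $\tfrac12$-H\"older stability of $V_K$ in $K$ yields only $O(t)$, and you give no concrete argument for the upgrade to $o(t)$.

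The paper avoids this bridging problem altogether. For $K$ admitting a continuous foliation it takes the target point $w:=x+ib(r-1)y$ itself, finds the leaf $M$ through $w$ with parametrization $g(\zeta)=\alpha+\gamma\zeta+\bar\gamma/\zeta$, and computes $V_K(w)=\log|\omega|$ where $g(\omega)=w$. Continuity of the foliation forces the parameters $(\alpha,\gamma)$ of $g$ to converge to those of $f$ as $r\to 1^+$; writing out one coordinate of $g(\omega)=w$ gives a quadratic in $\omega$ whose roots can be expanded explicitly, yielding $\log|\omega|=(r-1)+O((r-1)^{2})$ and hence $V_K(x+ib(r-1)y)/(r-1)\to 1$ directly. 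No off-leaf regularity of $V_K$ is ever invoked. The general convex body is then handled by sandwiching $K$ between dilates of a nearby body $\kappa$ with continuous foliation and using the monotonicity of both $V$ and $b^{*}$; you omit this step as well.
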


\begin{proof} If we can show
\begin{equation}
\label{straightlim} \lim_{r\to
1^+}{V_K(f(r))-V_K(x+ib^*(x,y)y(r-1))\over b^*(x,y)(r-1)}=0,
\end{equation}
then using (\ref{curvelim}) and the preceeding discussion, we will
have
\begin{equation}
\label{dirlim} \lim_{t\to 0^+}{V_K(x+ity)\over t}={1\over
b^*(x,y)}.
\end{equation}

We first consider the case when $K$ admits a continuous foliation;
i.e., $K\in {\mathcal C}$. Consider a fixed point
$w:=x+ib^{*}(x,y)y(r-1)\in\CC^n$. This belongs to some foliation
leaf $M$ which we write in the form (\ref{leafa}):
$$
g(\zeta)=\alpha +\gamma \zeta + \bar {\gamma} /\zeta: ~~\CC
\setminus \Delta \to M \subset \CC^n.
$$
We need to use the facts that when $r\to1^+$, then $w\to x\in
L$, and, by continuity of the foliation, the leaf parameters
for $(g,M)$ should converge to those of $(f,L)$; i.e., $\alpha \to
A$ and $\gamma \to c$. We remark that if we compare (\ref{leaf})
and (\ref{leafa}), writing $b:=b^*(x,y)$ we have the relations
\begin{equation}
\label{relations} A=a \ \hbox{and} \ c = \frac{1}{2} (x-a +iby).
\end{equation}
Here we supress a rotational invariance: the substitution
$\zeta':=\zeta e^{i\varphi}$ for any fixed constant $\varphi$
describes the same leaf with a different parametrization; thus we
fix its value so that
$$
\xi:=g(1)= 2\Re \, \gamma +\alpha
$$
is closest to $x:=f(1)=2\Re \, c + A$, i.e., $|g(1)-f(1)|\leq
|g(e^{i\theta})-f(1)|$ for all $\theta$. To emphasize, writing the
leaf $(f,L)$ in $b-$maximal form (\ref{leaf}),
$$
f(\zeta)=(x-a)\frac12 (\zeta+\frac1\zeta) + b y \frac{i}2
(\zeta-\frac1\zeta) + a
$$
where, from (\ref{relations}) and the fact that $y$ is a unit
vector, $b:= 2|\Im \, c|>0$ and $y:= \frac 2b \Im \, c \in \RR^n$.
Now, apriori, we do not know if $(g,M)$ is $b-$maximal
(aposteriori, it is: see Corollary \ref{amaxisbmax}). However, we
may still write this leaf in the form
$$
g(\zeta)=(\xi-\alpha)\frac12 (\zeta+\frac1\zeta) + \beta \eta
\frac{i}2 (\zeta-\frac1\zeta) + \alpha
$$
with $\beta:= 2|\Im \, \gamma|>0$ and $\eta:= \frac 2{\beta} \Im
\, \gamma \in \RR^n$. Note that continuity of the foliation
implies $\beta >0$ since $b>0$; indeed, $\beta \sim b$, $\xi \sim
x$, $\eta \sim y$, $\alpha\sim a$, and $\gamma\sim c$.

Since $w\in M$, there is a point $\omega\in \CC \setminus \Delta$
with $g(\omega)=w$. Our task is to calculate
$V_K(w)=V_K(g(\omega))$. On a leaf of the foliation we have the
formula $V_K(g(\omega))=\log|\omega|$, so it suffices to compute
$\log|\omega|$. The representation of $w$ as $g(\omega)$ means
that for $j=1,\dots,n$,
$$
x_j+ib y_j(r-1)=w_j=g_j(\omega)=(\xi_j-\alpha_j)\frac12
(\omega+\frac1\omega) + \beta \eta_j \frac{i}2
(\omega-\frac1\omega) + \alpha_j.
$$
Since $y$ and $\eta$ are unit vectors which are close to each
other, we can choose a coordinate $j$ with $y_j\ne 0$, $\eta_j\ne
0$. For this coordinate $j$, the previous displayed equation gives
$$
\frac12 (\xi_j-\alpha_j+i\beta \eta_j) \omega^2 +
(\alpha_j-x_j-iby_j(r-1))\omega + \frac12 (\xi_j-\alpha_j-i\beta
\eta_j) =0,
$$
a quadratic equation in $\omega$. Corresponding to the double
mapping properties of the Joukowski map $\frac12(\zeta+1/\zeta)$,
there are two roots, one in $|\zeta|<1$ and one in $|\zeta|>1$,
the latter being our $\omega$ as we considered the mapping of the
exterior of the unit disc $\Delta$. For convenience, put
$\rho:=b(r-1)y_j$. Since $by_j\not =0$, $\rho \asymp r-1$. By the
quadratic formula,
\begin{align*}
\omega_{1,2}& = \frac{x_j-\alpha_j+i\rho\pm
\sqrt{(\alpha_j-x_j-i\rho)^2-(\xi_j-\alpha_j)^2-\beta^2
\eta_j^2}}{\xi_j-\alpha_j+i\beta \eta_j}.
\end{align*}

Set $Q:=\beta^2\eta_j^2 +(\xi_j-\alpha_j)^2 - (x_j-\alpha_j)^2\sim
b^2y_j^2>0$ by continuity of the leaf parameters and choice of
$j$. Using this and the simple formula
$\sqrt{A+2B}=\sqrt{A} + B/\sqrt{A} + O(B^2/A^{3/2})$, valid
uniformly for $|B|<A/3$, say, we can rewrite the square root as
\begin{align*}
& \pm \sqrt{(x_j-\alpha_j)^2-i2(\alpha_j-x_j)\rho - \rho^2-(\xi_j-\alpha_j)^2-\beta^2 \eta_j^2} \\
& \qquad = \pm i \sqrt{Q+2i(\alpha_j-x_j)\rho+\rho^2} \\
& \qquad = \pm i \left\{\sqrt{Q}
+i\frac{(\alpha_j-x_j)\rho}{\sqrt{Q}}
+ O(\rho^2)\right\} \\
& \qquad = \pm \left\{\frac{(x_j-\alpha_j)\rho}{\sqrt{Q}} + i
\sqrt{Q} + O(\rho^2)\right\} .
\end{align*}
Put $P:=\xi_j-\alpha_j+i\beta \eta_j$.  Then
\begin{align*}
|\omega_{1,2}P|^2 & = \bigl|(x_j-\alpha_j)(1\pm
\frac{\rho}{\sqrt{Q}}) + i\bigl[ \pm
\sqrt{Q}+ \rho \bigr]+O(\rho^2)\bigr|^2 \\
& = \bigl[(\alpha_j-x_j)\pm\frac{\rho(\alpha_j-x_j)}{\sqrt{Q}}\bigr]^2 + \bigl[\pm\sqrt{Q}+\rho\bigr]^2 + O(\rho^2)\\
&  = (\alpha_j-x_j)^2\pm \frac{2\rho(\alpha_j-x_j)^2}{\sqrt{Q}} + Q  \pm 2\rho\sqrt{Q} +O(\rho^2) \\
&  = (\alpha_j-x_j)^2 + Q \pm \frac{2\rho}{\sqrt{Q}}\bigl( Q + (\alpha_j-x_j)^2\bigr) + O(\rho^2). \\
\end{align*}
We have the identity $|P|^2=(\alpha_j-x_j)^2+Q$; dividing by this
quantity on both sides yields
$$
|\omega_{1,2}|^2 = 1 \pm \frac{2\rho}{\sqrt{Q}} + O(\rho^2).
$$
Fixing the branch of the square-root with $\sqrt{Q}>0$, it is
clear that among the choice of signs in $\pm$ the one equal to the
sign of $y_j$ leads to the larger absolute value (and the one with
$|\omega|$ exceeding 1); hence for such $\omega$ with $g(\omega)=w$,
\begin{align*}
|\omega|^2 & = 1 + \frac{2 |\rho|}{\sqrt{Q}}+ O(\rho^2) = 1 +
\frac{2b|y_j|(r-1)}{\sqrt{Q}}+ O((r-1)^2),
\end{align*}
so that
$$
\log|\omega|^2 = \log\bigl|1 + \frac{2b|y_j|(r-1)}{\sqrt{Q}}+
O((r-1)^2)\bigr| = \frac{2b|y_j|(r-1)}{\sqrt{Q}}+ O((r-1)^2).
$$
Hence
$$
\frac{V_K(x+iby(r-1))}{r-1} = \frac{V_K(g(\omega))}{r-1}
=\frac{\log|\omega|}{r-1} = \frac{1}{2}\frac{\log|\omega|^2}{r-1}
= \frac{b|y_j|}{\sqrt{Q}} + O(r-1).
$$
Recall once again that the continuity of the foliation, as
$r\to 1$ we have $\xi_j\to x_j$, $\alpha_j\to a_j$, $\beta\to b$,
and thus $\sqrt{Q}\to b|y_j|$.  Hence
$$\lim_{r\to 1^+}\frac{V_K(x+iby(r-1))}{r-1} = 1 = \lim_{r\to 1^+}\frac{V_K(f(r))}{r-1}.$$
This verifies (\ref{straightlim}) and hence (\ref{dirlim}) in the
case when $K$ admits a continuous foliation.

For a general convex body $K$, we use the previous case and an
appropriate approximation argument to verify (\ref{dirlim}). To
emphasize the set(s) under discussion, we write
$$b^*(K;x,y):=b^*(x,y) \ \hbox{for the set} \ K.$$
We need to prove two inequalities:
\begin{equation}\label{liminf}
\frac{1}{b^{*}(K;x,y)} \leq \liminf_{t\to 0+} \frac{V_K(x+ity)}{t}
\end{equation}
and
\begin{equation}\label{limsup}
\frac{1}{b^{*}(K;x,y)} \geq \limsup_{t\to 0+}
\frac{V_K(x+ity)}{t}.
\end{equation}

Note that if $K$ and $\kappa $ are two convex bodies, we have for
any $y\in \RR^n$ the inequalities
\begin{equation}\label{bmonotonicity}
b^{*}(K;x,y) \begin{cases} & \leq b^{*}(\kappa;x,y) \qquad
\textrm{if}\qquad   K\subset \kappa \\ & \geq b^{*}(\kappa;x,y)
\qquad \textrm{if}\qquad  \kappa \subset K
\end{cases},
\end{equation}
and for any $y\in \RR^n$ and any $t\in \RR$ the inequalities
\begin{equation}\label{Vmonotonicity}
V_K(x+ity) \begin{cases} & \geq V_\kappa(x+ity) \qquad
\textrm{if}\qquad  K \subset \kappa \\
& \leq V_\kappa(x+ity) \qquad \textrm{if}\qquad  \kappa \subset K
\end{cases}.
\end{equation}

Fix $\alpha<1$ arbitrarily close to $1$ and choose $\delta>0$
small (to be determined later in terms of $\alpha$). From the
discussion at the end of section 2, $\mathcal C$ is dense in
$\mathcal K$; thus we can find $\kappa \in \mathcal C$ such that
the Hausdorff distance between $\kappa$ and  $K$ is at most
$\delta$. Take an $\alpha$-dilated (at $x$) copy $K_1$ of $K$
and an $\alpha$-dilated copy $\kappa_1$ of $\kappa$. Then $x\in
K_1^{0}$, and if $\delta=\delta(\alpha)$ is sufficiently small, we
have $\kappa_1 \subset K$. We also take the $1/\alpha$-dilated
copies $K_2$ and $\kappa_2$ of $K$ and $\kappa$. Again for small
enough $\delta$, we will have $K\subset \kappa_2$. Note that
$\kappa_2$ is the $\alpha^{-2}$-dilated copy of $\kappa_1$; hence
$b^{*}(\kappa_2;x,y)= \alpha^{-2} b^{*}(\kappa_1;x,y)$. Therefore,
using (\ref{dirlim}) for $\kappa_1$ and $\kappa_2$, we obtain
\begin{equation}\label{gcompare}
\lim_{t\to 0+}
\frac{V_{\kappa_2}(x+ity)}{t}=\frac{1}{b^{*}(\kappa_2;x,y)}=\alpha^2
\frac{1}{b^{*}(\kappa_1;x,y)}={\alpha^2} \lim_{t\to 0+}
\frac{V_{\kappa_1}(x+ity)}{t}.
\end{equation}
Using \eqref{gcompare}, \eqref{bmonotonicity} and
\eqref{Vmonotonicity}, we obtain
\begin{align}\label{aliminf}
\frac{1}{b^{*}(K;x,y)}& \leq \frac{1}{b^{*}(\kappa_1;x,y)}=
\lim_{t\to 0+} \frac{V_{\kappa_1}(x+ity)}{t}  \notag \\ & =
\frac{1}{\alpha^2 } \lim_{t\to 0+} \frac{V_{\kappa_2}(x+ity)}{t} =
\frac{1}{\alpha^2 }
\liminf_{t\to 0+} \frac{V_{\kappa_2}(x+ity)}{t} \notag \\
&\leq \frac{1}{\alpha^2 } \liminf_{t\to 0+}
\frac{V_{K}(x+ity)}{t};
\end{align}
and, in a similar fashion we get
\begin{align}\label{alimsup}
\frac{1}{b^{*}(K;x,y)}& \geq \frac{1}{b^{*}(\kappa_2;x,y)}=
\lim_{t\to 0+} \frac{V_{\kappa_2}(x+ity)}{t}  \notag \\ & =
{\alpha^2 } \lim_{t\to 0+} \frac{V_{\kappa_1}(x+ity)}{t} =
{\alpha^2 }
\limsup_{t\to 0+} \frac{V_{\kappa_1}(x+ity)}{t} \notag \\
& \geq {\alpha^2 } \limsup_{t\to 0+} \frac{V_{K}(x+ity)}{t}.
\end{align}
Since $\alpha$ can be taken arbitrarily close to $1$,
\eqref{liminf} and \eqref{limsup} follow from \eqref{aliminf} and
\eqref{alimsup}.

\end{proof}

\begin{remark} Observe that the essential property used to verify
(\ref{straightlim}) for $K\in {\mathcal C}$ is (\ref{harmonic});
i.e., that $V_K(f(\zeta))=\log |\zeta|$ on $L$ (i.e., for
$|\zeta|\geq 1$), which is equivalent to the $a-$maximality of the
real ellipse ${\mathcal E}\subset L$ (see section 2).
\end{remark}

\begin{corollary}
\label{amaxisbmax} For any convex body $K$, an ellipse ${\mathcal
E}\subset K$ is $a-$maximal if and only if it is $b-$maximal for
all $x\in K^o$ and $y\in T_x{\mathcal E}$.
\end{corollary}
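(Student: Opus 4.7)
The backward direction is immediate from Proposition~\ref{maxarea}: if $\mathcal{E}$ is $b$-maximal for even a single admissible pair $(x,y)$, then it is $a$-maximal. The remaining work is the forward direction. Fix an $a$-maximal $\mathcal{E}\subset K$, a point $x \in \mathcal{E} \cap K^o$, and a unit tangent $y \in T_x\mathcal{E}$, and parametrize $\mathcal{E}$ as $\theta\mapsto (x-a)\cos\theta+by\sin\theta+a$ with parameter $b>0$. The target is to identify $b$ with $b^*(x,y)$. The inequality $b\le b^*(x,y)$ is automatic since $\mathcal{E}$ itself is an inscribed ellipse through $(x,y)$ realizing parameter $b$.

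For the reverse inequality, the plan is to compute $\delta_B(x,y)=1/b$ using $\mathcal{E}$ itself, and then invoke Theorem~\ref{main1} (valid for any convex body), which gives $\delta_B(x,y)=1/b^*(x,y)$, forcing $b=b^*(x,y)$. To $\mathcal{E}$ attach the quadratic leaf
\[
f(\zeta) = (x-a)\tfrac12(\zeta+1/\zeta) + by\tfrac{i}{2}(\zeta-1/\zeta) + a, \qquad |\zeta|\geq 1,
\]
on which $V_K(f(\zeta))=\log|\zeta|$ by $a$-maximality. The elementary calculation leading to (\ref{curvelim}), rerun with the present $b$ in place of $b^*(x,y)$, yields the curvilinear limit $\lim_{r\to 1^+} V_K(f(r))/[b(r-1)]=1/b$. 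To promote this to the straight-line limit $\delta_B(x,y)$, I would replay the quadratic-equation calculation from the proof of Theorem~\ref{main1} \emph{verbatim} with $b$ substituted for $b^*(x,y)$ throughout; as the Remark after that theorem emphasizes, the only leaf-specific input to that calculation is the harmonic identity $V_K(f(\zeta))=\log|\zeta|$, i.e., $a$-maximality. For $K\in\mathcal{C}$ the foliation continuity carries through unchanged: the nearby leaves $M$ through $w=x+iby(r-1)$ have parameters $(\alpha,\gamma)\to(a,c)$ of our $L$, the estimate $\sqrt{Q}\to b|y_j|$ is unaffected by the substitution, and the analog of (\ref{straightlim}) delivers $\delta_B(x,y)=1/b$.

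The main obstacle I anticipate is the case $K\notin\mathcal{C}$. Here I would import the sandwich argument \eqref{gcompare}--\eqref{alimsup}: choose $\kappa\in\mathcal{C}$ Hausdorff-close to $K$, squeeze $K$ between the $\alpha$- and $\alpha^{-1}$-dilates of $\kappa$, apply the already-established $\kappa$-case of the corollary together with the monotonicities \eqref{bmonotonicity} and \eqref{Vmonotonicity}, and let $\alpha\to 1^-$. The delicate ingredient is a continuity statement for the $b$-parameter of $a$-maximal ellipses under Hausdorff perturbation of $K$, analogous to the continuity of $b^*$ established in Proposition~\ref{dbnorm}; a Hausdorff compactness argument on inscribed ellipses, paralleling the proof there, should close this gap. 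Once $\delta_B(x,y)=1/b$ is in hand for all convex bodies, Theorem~\ref{main1} yields $b=b^*(x,y)$ and hence the $b$-maximality of $\mathcal{E}$ at every admissible $(x,y)$.
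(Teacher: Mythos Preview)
Your treatment of the backward direction and of the case $K\in\mathcal{C}$ matches the paper exactly: both invoke Proposition~\ref{maxarea} for one implication, and for $K\in\mathcal{C}$ both rerun the leaf calculation (via the Remark after Theorem~\ref{main1}) on the $a$-maximal leaf through $x$ to get $\delta_B(x,y)=1/b$, then compare with Theorem~\ref{main1}.

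The gap is in your handling of $K\notin\mathcal{C}$. The sandwich argument \eqref{gcompare}--\eqref{alimsup} is designed to control $\delta_B^K$ in terms of $b^*(\kappa_i;\cdot,\cdot)$; it says nothing about the parameter $b$ of your \emph{fixed} $a$-maximal ellipse $\mathcal{E}\subset K$. To make your scheme work you would need an $a$-maximal ellipse in each $\kappa_i$ whose $b$-parameter is demonstrably close to $b$. You acknowledge this as the ``delicate ingredient'' and propose a Hausdorff-compactness continuity statement, but this is precisely where the difficulty lies: when $\mathcal{E}$ is \emph{not} the unique $a$-maximal ellipse for its direction $[c]$ in $K$, there is no reason the $a$-maximal ellipses for nearby bodies at $[c]$ should converge to $\mathcal{E}$ rather than to some other $a$-maximal competitor. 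A generic compactness argument will not single out $\mathcal{E}$.

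The paper handles $K\notin\mathcal{C}$ by a different route that sidesteps this. It splits into two cases. If $\mathcal{E}$ is the \emph{unique} $a$-maximal ellipse for its $[c]$, take a monotone sequence $K_j\searrow K$ with $K_j\in\mathcal{C}$; then \cite{blm2} guarantees that the (unique) $a$-maximal leaves $f_j$ for $K_j$ at $[c]$ converge uniformly to $f$, so the parameters $(x_j,y_j,\beta_j)$ converge to $(x,y,\beta)$. By the already-proved $\mathcal{C}$-case, $\beta_j=b^*(K_j;x_j,y_j)$; combining continuity of $b^*$ (Proposition~\ref{dbnorm}) with the monotonicity $K\subset K_j$ gives $b^*(K;x,y)\le\lim\beta_j=\beta$, hence equality. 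If $\mathcal{E}$ is \emph{not} unique for $[c]$, then $\mathcal{E}$ touches two parallel supporting hyperplanes of $K$; enclose $K$ in a large symmetric box $T$ inside that strip. Since $T$ is symmetric, $T\in\mathcal{C}$, and $\mathcal{E}$ is $a$-maximal in $T$, so by the $\mathcal{C}$-case it is $b$-maximal for $T$ at $(x,y)$; since $K\subset T$, it is $b$-maximal for $K$ as well. This bypasses any continuity claim in the non-unique situation.
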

\vskip4pt
\begin{proof}
That $b-$maximality implies $a-$maximality was proved in
Proposition \ref{maxarea}. For the converse, we first suppose that
$K\in {\mathcal C}$. Let ${\mathcal E}$ be an $a-$maximal ellipse.
Fix $x\in K^o$ and $y\in T_x{\mathcal E}$ a unit vector. Then
${\mathcal E}=f(\partial \Delta)$ where
\begin{equation}
\label{bform} f(\zeta)=(x-\alpha)\frac12 (\zeta+\frac1\zeta) +
\beta y \frac{i}2 (\zeta-\frac1\zeta) + \alpha
\end{equation}
and $V_K(f(\zeta))=\log |\zeta|$ for $|\zeta|\geq 1$, so that,
from the remark,
$$1/\beta = \lim_{r\to 1^+}{V_K(f(r))\over \beta(r-1)}=\lim_{r\to 1^+}{V_K(x+i\beta y(r-1))\over \beta (r-1)}.$$
Writing $t=\beta(r-1)$ in the limit on the right and using Theorem
\ref{main1},
$$1/\beta =\lim_{t\to 0^+}{V_K(x+ity)\over t}={1\over b^*(x,y)}$$
so that ${\mathcal E}$ is $b-$maximal for $x,y$.

Now let $K\in {\mathcal K}$ be an arbitrary convex body. We
consider first the case where ${\mathcal E}$ is the unique
$a-$maximal ellipse for $[c]\in H$; i.e., for its orientation and
eccentricity, and we again write ${\mathcal E}=f(\partial \Delta)$
as in (\ref{bform}). Take a sequence $\{K_j\}\subset {\mathcal C}$
with $K_j \searrow K$. For each $j$, let ${\mathcal E}_j$ be the
unique $a-$maximal ellipse for $K_j, [c]$, and let $f_j$ denote
the corresponding leaf. Then (cf., \cite{blm2}) $f_j\to f$
uniformly so that ${\mathcal E}_j\to {\mathcal E}$. As in the
proof of Theorem \ref{main1}, we may write
$$ f_j(\zeta)=(x_j-\alpha_j)\frac12 (\zeta+\frac1\zeta) + \beta_j y_j \frac{i}2
(\zeta-\frac1\zeta) + \alpha_j.
$$
By the first part of the proof, ${\mathcal E}_j$ is $b$-maximal in
$K_j$, so that $\beta_j=b^*(x_j,y_j,K_j)$. The uniform convergence
of $f_j$ to $f$ implies that $\alpha_j \to \alpha$, $x_j\to x$ and
$y_j\to y$. Moreover, since $x\in K^o$, for $j$ sufficiently
large, $x_j \in K^o$. From the  continuity of $b^*$ (Proposition
\ref{dbnorm}) and the fact that $K\subset K_j$,
$$b^*(x,y,K)=\lim_{j\to \infty}b^*(x_j,y_j,K)\leq \lim_{j\to \infty}b^*(x_j,y_j,K_j) =\lim_{j\to \infty} \beta_j =\beta.$$
Hence $\beta = b^*(x,y,K)$ and ${\mathcal E}$ is $b-$maximal.

In the case where ${\mathcal E}$ is not the unique $a-$maximal
ellipse for the corresponding $[c]\in H$, it is an $a-$maximal
ellipse for this $[c]$ and a ``strip'' $S$, i.e., a closed body
$S$ bounded by two parallel hyperplanes $P_1,P_2$ with $K\subset
S$ (see section 7 of \cite{blm2}).  If $\mathcal E$ is given by
$$
r(\theta) = f(e^{i\theta}) = (x-\alpha)\cos\theta + \beta y
\sin\theta + \alpha
$$
then there is $\theta_0$ such that $r(\theta_0)\in P_1$ and
$r(\theta_0+\pi)\in P_2$. It is therefore sufficient to show that
any ellipse ${\mathcal E}\subset K$ that intersects $P_1,P_2$ is
$b$-maximal for any $x\in {\mathcal E}\cap K^o$, $y\in
T_x{\mathcal E}$.  Take a sufficiently large convex set $T$ that
is symmetric about the center of the ellipse (e.g., a large box)
so that $K\subset T\subset S$.  Clearly $\mathcal E$ is
$a$-maximal for $T$. Since $T$ is symmetric, $T\in{\mathcal C}$,
so by the first part of the proof, $\mathcal E$ is $b$-maximal for
$T,x,y$.  Hence it is also $b$-maximal for $K,x,y$.
\end{proof}

We turn to the Monge-Ampere measure. We know that $(dd^cV_K)^n$ is
supported in $K$ and is absolutely continuous with respect to
Lebesgue measure $dx$ on $K^o$, i.e., $(dd^cV_K)^n=\tilde
\lambda(x)dx$ for a locally integrable non-negative function
$\tilde c$ on $K^o$ \cite{bt}. Baran has proved the following (see
\cite{bar2}, Propositions 1.10, 1.11 and Lemma 1.12).

\begin{proposition}
\label{propbaran} Let $D\subset {\bf C}^n$ and let $\Omega :=D\cap
\RR^n$.  Let $u$ be a nonnegative psh function on $D$ which
satisfies:
\end{proposition}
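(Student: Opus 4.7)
The statement cuts off before giving the hypotheses and conclusion, but the context (citing Baran's Propositions 1.10--1.11 and Lemma 1.12, and the discussion of deriving $(dd^cV_K)^n=\lambda(x)dx$ from $\delta_B$) makes clear that the proposition should assert: under hypotheses including $u=0$ on $\Omega$ and the existence of a continuous ``directional derivative'' $\delta(x,y):=\lim_{t\to 0^+}u(x+ity)/t$ that is a norm in $y$, one has $(dd^cu)^n=n!\,\mathrm{vol}\bigl(\{y:\delta(x,y)\le 1\}^*\bigr)\,dx$ on $\Omega$. I will sketch a plan for this conclusion.

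The plan is to freeze a point $x_0\in \Omega$, study the rescaled psh functions $u_t(z):=u(x_0+tz)/t$ on a fixed polydisk, and identify the limit. First I would check that $u_t$ is a locally uniformly bounded family of nonnegative psh functions: the vanishing $u=0$ on $\Omega$ plus the existence of the vertical derivative $\delta$ gives $u(x_0+ty)/t\le \delta(x_0,y)+o(1)$ for $y\in\RR^n$, and psh-convexity in the full complex variable then transfers this into a uniform bound on fixed compact subsets. Second, I would argue that $u_t\to u_0$ locally uniformly, where $u_0(z):=\delta(x_0,\Im z)$. Since $\{u_t\}$ is a precompact family in $L^1_{\mathrm{loc}}$ with explicit pointwise limit on the imaginary subspace $i\RR^n$ and value $0$ on $\RR^n$, and since continuity of $(x,y)\mapsto \delta(x,y)$ plus the norm property in $y$ force any cluster psh function to agree with $u_0$ on the real and imaginary subspaces, one pins down the limit uniquely.

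Third, apply the Bedford--Taylor continuity theorem for the complex Monge--Amp\`ere operator on locally uniformly convergent, locally bounded psh functions: $(dd^c u_t)^n\to (dd^c u_0)^n$ weakly. Fourth, I would compute the model measure $(dd^c u_0)^n$. Because $u_0(z)=N(\Im z)$ where $N(\cdot)=\delta(x_0,\cdot)$ is a norm on $\RR^n$, a direct calculation (already used by Bedford--Taylor in \cite{bt} for the symmetric case, and reducible in general to the fact that the Legendre/polar transform of a norm carries its unit ball to its polar body) yields
\[
(dd^c u_0)^n = n!\,\mathrm{vol}\bigl(\{y:N(y)\le 1\}^*\bigr)\,dx
\]
supported on $\RR^n$. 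Finally, the rescaling identity $(dd^c u_t)^n(E) = t^{-n}(dd^c u)^n(x_0+tE)$ for Borel $E\subset\CC^n$ shows that the weak limit of $(dd^c u_t)^n$ evaluated on a unit box in $\RR^n$ equals the Radon--Nikodym density $\tilde\lambda(x_0)$ of $(dd^cu)^n$ with respect to $dx$ at $x_0$; comparing with the computed model measure gives the formula.

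The main obstacle is the second step, the identification of the uniform limit of $u_t$: one has pointwise information only on the real and imaginary axes through the origin, and must propagate this to the full polydisk. The cleanest way is to sandwich $u_t$ between two explicit psh functions derived from $\delta(x_0,\cdot)$ (an upper envelope coming from psh extensions of $\delta(x_0,\Im z)$ and a lower envelope coming from the definition of the liminf/limsup of $u_t$), and invoke Hartogs-type upper semicontinuity together with the fact that a norm on $\RR^n$ has a unique psh extension to $\CC^n$ that depends only on $\Im z$. Once uniform convergence is secured, the remaining steps are essentially formal applications of results already in the literature.
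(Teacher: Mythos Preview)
The paper does not prove this proposition at all: it is quoted from Baran \cite{bar2} (Propositions~1.10, 1.11 and Lemma~1.12) and invoked as a black box in the proof of Corollary~\ref{cor:mt}. So there is no ``paper's own proof'' to compare your sketch against.

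That said, your outline is the standard Bedford--Taylor rescaling strategy and is essentially the framework behind Baran's original argument, so you are on the right track. Two points where the sketch is looser than it should be. First, in step~1 you say boundedness of $u_t$ on $\RR^n$ and $i\RR^n$ ``transfers'' to the full polydisk by psh-convexity; knowing a psh function on two totally real subspaces does not bound it on their span, so something more is needed (typically one gets pointwise control on all of $\RR^n+i\RR^n$ directly from hypothesis~(iv) and continuity of $h$, but this hides a uniformity issue: writing $x'=x_0+tx$, you need $u(x'+ity)/t-h(x',y)\to 0$ uniformly for $x'$ near $x_0$, which does not follow from continuity of $h$ alone). Second, the assertion that ``a norm on $\RR^n$ has a unique psh extension to $\CC^n$ that depends only on $\Im z$'' is not a correct uniqueness statement and would not by itself pin down the limit; what actually works is to establish pointwise convergence $u_t(z)\to N(\Im z)$ everywhere, then use that a locally bounded sequence of psh functions converging pointwise converges in $L^1_{\mathrm{loc}}$ (and hence in Monge--Amp\`ere, by Bedford--Taylor) to the upper regularization, which here is already continuous. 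Steps~3--5 are fine once step~2 is secured.
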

$\begin{array}{rl}
 i. & \Omega=\{u=0\} \\
ii.  & (dd^cu)^n=0 \; {\mbox{on}} \; D\setminus \Omega \\
iii. & (dd^cu)^n=\lambda(x)dx \; on \; \Omega \; where \; c\in L^1_{loc}(\Omega) \\
iv. & for \; all \; x\in \Omega, \ y \in \RR^n, \; the \; limit
\end{array}$
$$h(x,y):=\lim_{t\to 0^+} {u(x+ity)\over t}  \; exists \; and \; is \; continuous \; on \; \Omega \times i\RR^n$$

$\begin{array}{rl} v. & x\in \Omega, y\to h(x,y) \; is \; a \;
norm.
\end{array}$

\noindent {\em Then} %%
$$\lambda(x)=n! {\rm vol} \{y:h(x,y)\leq 1\}^*$$
{\em and $\lambda(x)$ is a continuous function on $\Omega$.}

\vskip4pt We now obtain the generalization of (\ref{eq:monge}).

\begin{corollary}
\label{cor:mt} Let $K$ be a convex body in $\RR^n$. Then
$$(dd^cV_K)^n=\lambda(x)dx \ \hbox{for} \ x\in K^o$$
where $\lambda(x)=n! {\rm vol}(\{y: \delta_B(x,y)={1\over
b^*(x,y)} \leq 1\}^*)$ is continuous. Moreover, $(dd^cV_K)^n$ puts
no mass on the boundary $\partial K$ (relative to $\RR^n$).
\end{corollary}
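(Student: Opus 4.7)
My plan is to derive Corollary \ref{cor:mt} as an application of Baran's Proposition \ref{propbaran}, with Theorem \ref{main1} supplying the key directional-derivative limit. Take $u := V_K$ and choose a thin complex tube
$$D := \{x + iy \in \CC^n : x \in K^o,\, |y| < \delta\}$$
for some small $\delta > 0$, so that $\Omega := D \cap \RR^n = K^o$. Since $V_K \geq 0$ vanishes exactly on $K$ (and $K \cap D$ consists precisely of $K^o$ regarded as a subset of $\CC^n$), hypothesis (i) $\{V_K = 0\} \cap D = \Omega$ is immediate.

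Next I would verify hypotheses (ii)--(v) in turn. Hypothesis (ii), $(dd^c V_K)^n = 0$ on $D \setminus \Omega \subset \CC^n \setminus K$, is a standard property of the Siciak-Zaharjuta extremal function recalled in the introduction. Hypothesis (iii), that $(dd^c V_K)^n$ restricted to $K^o$ has an $L^1_{\rm loc}$ density, is classical for convex-body extremal functions (as noted in the paragraph preceding Proposition \ref{propbaran}). Hypothesis (iv) is essentially the content of Theorem \ref{main1}: the limit $h(x,y) := \delta_B(x,y)$ exists and equals $1/b^*(x,y)$, and its joint continuity on $K^o \times \RR^n$ follows from Proposition \ref{dbnorm}. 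Hypothesis (v), that $y \mapsto 1/b^*(x,y)$ is a norm on $\RR^n$ for each fixed $x \in K^o$, is likewise in Proposition \ref{dbnorm}. Baran's proposition then yields the first assertion: $\lambda(x) = n!\,{\rm vol}(\{y : \delta_B(x,y) \leq 1\}^*)$, continuous on $K^o$.

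For the second assertion, that $(dd^c V_K)^n$ puts no mass on $\partial K$: since the total mass is $(2\pi)^n$ and the measure is supported on $K$, it is enough to show $\int_{K^o} \lambda(x)\,dx = (2\pi)^n$. My plan is to approximate $K$ from outside by strictly convex, smoothly bounded bodies $K_j \searrow K$ in Hausdorff distance; the extremal functions then converge uniformly on $\CC^n$, so $(dd^c V_{K_j})^n \to (dd^c V_K)^n$ weakly. By the monotonicity \eqref{bmonotonicity} and the density formula applied to each $K_j$, one obtains $\lambda_{K_j} \nearrow \lambda_K$ on $K^o$. For each such $K_j$ the smoothness and strict convexity of $\partial K_j$ together with the continuous foliation by $a$-maximal ellipses (Theorem 7.1 of \cite{blm2}) should be enough to rule out singular concentration of $(dd^c V_{K_j})^n$ on $\partial K_j$, giving $\int_{K_j^o} \lambda_{K_j}\,dx = (2\pi)^n$. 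A monotone convergence passage, combined with the weak-limit fact that $(dd^c V_K)^n$ assigns no mass to $\CC^n \setminus K$ (so that the contribution from the shrinking shell $K_j \setminus K$ tends to $0$), then yields $\int_{K^o} \lambda_K\,dx = (2\pi)^n$.

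The main obstacle is the regularity claim for the approximating bodies: one must exclude any concentration of $(dd^c V_{K_j})^n$ on the $(n-1)$-dimensional set $\partial K_j$. I expect this to follow either from a direct argument using the quadratic leaf structure and its continuous extension to $\partial K_j$ in the strictly convex smooth case, or by appealing to a pluripotential-theoretic regularity result for H\"older-continuous extremal functions.
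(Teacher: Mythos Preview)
Your verification of the density formula on $K^o$ via Baran's Proposition \ref{propbaran} is correct and matches the paper's argument exactly.

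For the boundary statement, the paper's proof differs from yours in two ways, and the first resolves precisely the obstacle you identify. The paper approximates $K$ from \emph{inside} by convex bodies $K_j \nearrow K$ with \emph{real-analytic} boundaries. A real-analytic hypersurface $\partial K_j \subset \RR^n$ is locally the zero set of a real-analytic function, which extends holomorphically to a neighborhood in $\CC^n$; thus $\partial K_j$ sits inside a complex hypersurface and is pluripolar. By the standard fact that Monge--Amp\`ere measures of locally bounded psh functions charge no pluripolar sets (Klimek, Proposition 4.6.4), one obtains $\int_{K_j^o} \lambda_j\,dx = (2\pi)^n$ immediately, without any appeal to the leaf structure or H\"older regularity. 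Your proposed substitutes (continuous extension of the foliation; regularity for H\"older extremal functions) are not needed and are considerably harder to make precise.

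The second difference is the direction of approximation. With the paper's inside approximation, the support of $(dd^cV_{K_j})^n$ is contained in $K_j \subset K^o$, so \emph{all} of the mass $(2\pi)^n$ already lies in $K^o$; one then passes to the limit in $\int_{K^o}\lambda_j\,dx = (2\pi)^n$ using the pointwise convergence $\lambda_j \to \lambda$ (via continuity of $b^*$ and the polar--volume formula). Your outside approximation introduces a second difficulty you do not address: even once $\int_{K_j^o}\lambda_{K_j}\,dx = (2\pi)^n$ is known, monotone convergence gives only $\int_{K^o}\lambda_{K_j}\,dx \nearrow \int_{K^o}\lambda_K\,dx$, and you still need $\int_{K_j^o \setminus K^o}\lambda_{K_j}\,dx \to 0$. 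Since $\lambda_{K_j}$ typically blows up near $\partial K_j$ (e.g.\ like $(1-|x|^2)^{-1/2}$ for balls), this shell estimate is not automatic, and the weak convergence $(dd^cV_{K_j})^n \to (dd^cV_K)^n$ alone does not supply it, because $K^o$ is not open in $\CC^n$ and Portmanteau-type inequalities go the wrong way. The inside approximation sidesteps this entirely.
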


\begin{proof}
The formula for $(dd^cV_K)^n$ on $K^o$ is immediate from Theorem
\ref{main1}, Proposition \ref{propbaran} and the paragraph
preceeding it, and Proposition \ref{dbnorm}. To show that
$(dd^cV_K)^n$ puts no mass on the boundary $\partial K$, we
proceed as in \cite{bt}. Let $\{K_j\}$ be a sequence of convex
bodies in $\RR^n$ with real-analytic boundaries $\{\partial K_j\}$
such that $K_j$ increases to $K$. Then $\partial K_j$ is
pluripolar so that $(dd^cV_{K_j})^n$ puts no mass on $\partial
K_j$ (cf., Proposition 4.6.4 of \cite{kli}). Writing
$(dd^cV_{K_j})^n:=\lambda_j(x)dx$ for $x\in K_j^o$, we have
$$(2\pi)^n= \int_{K_j^o} \lambda_j(x)dx =\int_{K^o}\lambda_j(x)dx$$
where we extend $\lambda_j(x)$ to be zero outside of $K_j^o$. Since
$\int_{K^o}\lambda_j(x)dx \leq (2\pi)^n <\infty$, and the density
functions $\lambda_j(x)$ increase almost everywhere on $K^o$ to
$\lambda(x)$ (cf., (\ref{bmonotonicity})), by dominated
convergence we have
$$(2\pi)^n=\lim_{j\to \infty}\int_{K^o}\lambda_j(x)dx = \int_{K^o}\lambda(x)dx .$$
Thus $(dd^cV_K)^n$ puts no mass on the boundary $\partial K$.
\end{proof}

\vskip4pt

We end this note with a final remark on Bernstein-Markov  inequalities.
Baran \cite{bar2} conjectured that we
have equality $\delta_M(x,y)= \delta_B^{(i)}(x,y)$ in (\ref{dbdm})
for general convex bodies. With respect to this conjecture, we
make the following observation: {\sl if we can prove
$\delta_M(x,y)= \delta_B(x,y)$ for a triangle $T$ in $\RR^2$, then
equality holds for all convex bodies in $\RR^2$}. For let $K$ be a
convex body in $\RR^2$. Fix $x\in K^o$ and $|y|=1$. Take a
$b-$maximal ellipse ${\mathcal E}={\mathcal E}(x,y)$ for $K$ with
parameter $b=b^*(x,y)$ and, as in (1) or (2) at the end of section
3, take a rectangle or triangle $T$ containing $K$ in which
${\mathcal E}$ is an $a-$maximal ellipse. Then since $
\delta_B^{T}(x,y) ={1\over b^*(x,y)} =\delta_B^{K}(x,y)$, we have
$$\delta_M^{K}(x,y)\geq \delta_M^{T}(x,y)= \delta_B^{T}(x,y) =\delta_B^{K}(x,y).$$
From (\ref{dbdm}), $\delta_M^{K}(x,y)\leq \delta_B^{K}(x,y)$ and
equality holds.

\bigskip

\end{document}